\title{Localized Transfunctions}
\author{Jason Bentley and Piotr Mikusi\'{n}ski\\
Department of Mathematics\\
University of Central Florida\\
Orlando, Florida, USA}
\newcommand{\mc}[1]{\mathcal{#1}}
\newcommand{\mb}[1]{\mathbb{#1}}
\newcommand{\mf}[1]{\mathbf{#1}}
\newcommand{\rsq}{\rightsquigarrow}
\newcommand{\eps}{\varepsilon}
\newcommand{\supp}{\operatorname{supp}}
\newcommand{\nulls}{\operatorname{null}}
\newtheorem{theorem}{Theorem}[section]
\newtheorem{proposition}[theorem]{Proposition}
\newtheorem{corollary}[theorem]{Corollary}
\theoremstyle{definition}
\newtheorem{definition}[theorem]{Definition}
\newtheorem{example}[theorem]{Example}
\newenvironment{remark}[1][Remark]{\begin{trivlist}
\item[\hskip \labelsep {\bfseries #1}]}{\end{trivlist}}
\begin{document}
\maketitle

\section*{Abstract}
A transfunction is a function which maps between sets of finite measures on measurable spaces (see \cite{Mikusinski}). 

Push-forward operators form one important class of examples of transfunctions and are identified with their respective measurable functions. In this regard, transfunctions are a generalization of measurable functions between measurable spaces. Additionally, there are naturally arising transfunctions with nice properties which are not measurable functions.

Transfunctions which are weakly $\sigma$-additive (commutable with addition over countable sequences of orthogonal measures) between second-countable metric spaces are of particular interest and are primarily developed in this paper. We study such transfunctions which are localized: sending source measures carried by small open sets to target measures also carried by small open sets. 


With the right settings and assumptions, we develop some theorems which characterize continuous functions and measurable functions, and show that the behavior of localized transfunctions can be approximated by measurable functions and by continuous functions, but only up to some error. We also characterize transfunctions that correspond to Markov operators.

In our investigation of transfunctions we are motivated by several potential applications, including Monge-Kantorovich transportation problem or population dynamics that will be presented in some detail in this paper. 

\section{Introduction}

Let $(X,\Sigma_X)$ and $(Y, \Sigma_Y)$ be measurable spaces with sets of finite measures $\mc{M}_X$ and $\mc{M}_Y$, respectively. A \textit{transfunction} is any function $\Phi : \mathcal{M}_X \to \mathcal{M}_Y$. 

Let $f: (X,\Sigma_X) \to (Y,\Sigma_Y)$ be a measurable function. Then the push forward operator
$f_\# : \mc{M}_X \to \mc{M}_Y $ defined by $f_\# (\mu) (B) := \mu (f^{-1}(B))$ is a transfunction. In particular, $f_\#$ is the push-forward operator associated with $f$. We will now identify the measurable function $f$ with the corresponding transfunction $f_\#$ as defined above. 

With this identification established, we shall treat transfunctions $\Phi: \mc{M}_X \to \mc{M}_Y$ as generalized functions, and notate them as $\Phi: X \rsq Y$ when the context is clear.

One can easily verify that the transfunction $f_\#$ has several of the properties mentioned in \cite{Mikusinski}. These properties are nice for transfunctions to have, and in certain applications, they might be assumed for the transfunction model, but in general, none of these nice properties are guaranteed to transfunctions, just as functions are not guaranteed to have nice properties such as continuity, linearity, injectivity or surjectivity. 

Here we shall discuss two potential directions in which transfunctions might be applied. The first will be the Monge-Kantorovich transportation problem, and the second will be setting up transfunctions as models for population dynamics in mathematical biology.

Ambrosio summarizes the Monge-Kantorovich problem and its several generalizations and strategies in the first section of \cite{Ambrosio}. Notably, the use of transport plans (measures on product spaces with marginals equal to the prior and posterior distributions) as a generalization of transport mappings provides effective weak solutions to the M-K problem, which may correspond to true solutions under the right conditions. We now state the M-K problem as similarly defined in \cite{Ambrosio}.

Let $(X,\Sigma_X) = (Y,\Sigma_Y) = (\mb{R}^d, \mc{R}^d)$ be the usual measurable space ($d=2$ normally) with Lebesgue measure $\lambda$. Let $\mc{M}_\lambda$ denote the space of measures on $\mc{R}^d$ absolutely continuous with respect to $\lambda$. Let $\rho^X, \rho^Y$ be prior and posterior probability measures absolutely continuous with respect to $\lambda$. Let $c: \mb{R}^d \times \mb{R}^d \to [0,\infty)$ be a cost function.

The goal of the Monge-Kantorovich problem is to consider all measurable maps $\mc{F} := \{T: \mb{R}^d \to \mb{R}^d \text{ measurable } : \rho^Y = T_\# \rho^X\}$ which push $\rho^X$ to $\rho^Y$, and to determine if there exists one map among $\mc{F}$ with minimum cost
\begin{align*}
\inf_{T \in \mc{F}} \left \{ \int_X c(x,T(x)) d\rho^X (x) \right\} .
\end{align*}

Instead of transport maps $\mc{F}$, one may also consider the collection $\mc{P}$ of all transport plans, that is, all measures on the product $\sigma$-algebra of $\Sigma_X,\Sigma_Y$ such that  $\rho^X(A) = \mu(A \times Y)$ for all $ A \in \Sigma_X$ and $\rho^Y(B)=\mu(X \times B)$ for all $B \in \Sigma_Y\}$. In this case, the goal is to find a transport plan with minimum cost
\begin{align*}
\inf_{\mu \in \mc{P}} \left \{ \int_{X\times Y} c\; d\mu \right\} .
\end{align*}

Transport plans can be characterized using Markov operators, described in \cite{Taylor}.
In a later section of the paper, we will characterize transport plans via a class of transfunctions with properties analogous with Markov operator properties (strongly $\sigma$-additive, positive and total-measure-preserving). The properties found for these transfunctions are relatively strong, and we are currently investigating whether they can be relaxed to form a more generalized setup than what Markov operators (hence transport plans) have.

Now we discuss potential models for mathematical biology. 

Given measure $\mu$ and $\mu$-integrable $f$, we denote by $f \cdot \mu$ or by $\int_\square f d\mu$ the measure $A \mapsto \int_A f d\mu$.

Let $X=Y=\mb{R}^d$ with Lebesgue measure $\lambda$. Let $\eps>0$ and let $\kappa$ be a measure with support in  $B(0,\eps)$. Let $f: \mb{R}^d \to \mb{R}^d$ be measurable and let $g: \mb{R}^d \to [0,\infty)$ be $\lambda$-integrable. Then consider the transfunction 
$$
\Phi : \mu \mapsto g \cdot ((f_\# \mu) \ast \kappa)
$$ 
which models how a population $\mu$ will migrate or travel via $f$ to become $f_\# \mu$, diffuse by $\kappa$ (from territorial behavior or dispersal of offspring) to become $(f_\# \mu) \ast \kappa$, reproduce or die at rates depending on the environmental factors $g$ (food, water, shelter, predators, etc) to become $g \cdot ((f_\# \mu) \ast \kappa)$ after some set amount of time.

If one is dealing with an immobile species (such as rooted plants), then $f$ is simply the identity and the model becomes $\mu \mapsto g \cdot (\mu \ast \kappa)$, where $\kappa$ describes the dispersal of seeds.

If one is dealing with microorganisms within a predetermined culture, then $g$ may be nearly a constant which contains the expected growth rate, and $f$ may be minuscule or significant depending on whether culture is stationary or agitated.

In all of these examples, $\kappa$, $f$ and $g$ are fixed. This may cause limitations in describing natural phenomena. However, we could allow $\kappa$, $f$ and $g$ to update at each step and choose to iterate the model described above. Alternatively, in future work, we may also consider a family of transfunctions $\{\Phi_t : \mb{R}^d \to \mb{R}^d : t \ge 0\}$ where $\Phi_0 \mu = \mu$ and $\Phi_s(\Phi_t(\mu)) = \Phi_{s+t}~(\mu)$ to describe a continuous model for natural phenomena. Then $\kappa$, $f$ and $g$ could update continuously.

\section{Prerequisite Definitions and Theorems}

Unless otherwise specified, all instantiated measures shall be finite and positive. Occasionally, we may sum countably many measures together. When this occurs, the sum may be finite or infinite and we will not determine the finiteness of the measure whenever it is inconsequential to the argument at hand.

\subsection*{Carriers}

\begin{definition}\label{def_carrier} If $\mu$ is a positive or vector measure on $(X,\Sigma_X)$ and $A\in \Sigma_X$, then we say that \textbf{$A$ is a carrier of $\mu$} and write $\mu\sqsubset A$ if $|\mu|(A^c) = 0$, where $|\mu|$ denotes the variation measure of $\mu$. If $\mu$ is a positive measure, then $\mu \sqsubset A$ is also equivalent to the simpler condition that $\mu(A^c) = 0$. 

\end{definition}
The following proposition shows some useful, basic properties of measures and their carriers.

\begin{proposition}\label{prop_carrier}
Let $\mu$, $\nu$, $\kappa_1, \kappa_2, \dots,$ and $\sum_{i=1}^\infty \kappa_i$ either all be vector measures or all be positive measures in $(X,\Sigma_X)$, and let $A,B,C_1,C_2,\dots$, and $\cap_{i=1}^\infty C_i$ be measurable sets in $\Sigma_X$. Then
\begin{enumerate}[label=(\roman*)]
\item[\rm{(i)}] $\mu \sqsubset A \text{ and } A \subseteq B \text{ implies that } \mu \sqsubset B$;
\item[\rm{(ii)}] $\forall i \in \mb{N} \text{ , } \mu \sqsubset C_i \text{ implies that } \mu \sqsubset \cap_{i=1}^\infty C_i$;
\item[\rm{(iii)}] $|\mu| \le |\nu| \text{ and } \nu \sqsubset A \text{ implies that } \mu \sqsubset A$;
\item[\rm{(iv)}] $\forall i \in \mb{N} \text{ , } \kappa_i \sqsubset A \text{ implies that }  \textstyle{\sum}_{i=1}^\infty \kappa_i \sqsubset A$.
\end{enumerate}
\end{proposition}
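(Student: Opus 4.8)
The plan is to verify each of the four statements (i)--(iv) directly from Definition~\ref{def_carrier}, reducing everything to elementary properties of the variation measure $|\mu|$ and basic measure theory. Throughout, recall that $\mu \sqsubset A$ means precisely $|\mu|(A^c) = 0$, and that $|\mu|$ is itself a positive measure, so that monotonicity, countable subadditivity, and the behavior under complements and intersections are all available.

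For (i): if $\mu \sqsubset A$ and $A \subseteq B$, then $B^c \subseteq A^c$, so by monotonicity of the positive measure $|\mu|$ we get $|\mu|(B^c) \le |\mu|(A^c) = 0$, hence $\mu \sqsubset B$. For (ii): if $\mu \sqsubset C_i$ for every $i$, then $\bigl(\bigcap_{i=1}^\infty C_i\bigr)^c = \bigcup_{i=1}^\infty C_i^c$, and by countable subadditivity $|\mu|\bigl(\bigcup_{i=1}^\infty C_i^c\bigr) \le \sum_{i=1}^\infty |\mu|(C_i^c) = 0$, so $\mu \sqsubset \bigcap_{i=1}^\infty C_i$. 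For (iii): if $|\mu| \le |\nu|$ pointwise on $\Sigma_X$ and $\nu \sqsubset A$, then $|\mu|(A^c) \le |\nu|(A^c) = 0$, so $\mu \sqsubset A$. For (iv): if $\kappa_i \sqsubset A$ for all $i$, set $\kappa = \sum_{i=1}^\infty \kappa_i$; one checks $|\kappa| \le \sum_{i=1}^\infty |\kappa_i|$ (triangle inequality for the variation, or in the positive case simply $|\kappa| = \kappa = \sum_i \kappa_i = \sum_i |\kappa_i|$), and then $|\kappa|(A^c) \le \sum_{i=1}^\infty |\kappa_i|(A^c) = 0$, giving $\sum_{i=1}^\infty \kappa_i \sqsubset A$.

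The only step requiring a genuine (if standard) argument is the inequality $\bigl|\sum_{i=1}^\infty \kappa_i\bigr| \le \sum_{i=1}^\infty |\kappa_i|$ in the vector-measure case of (iv): for any measurable partition $\{E_j\}$ of a set $E$, $\sum_j \bigl|\sum_i \kappa_i(E_j)\bigr| \le \sum_j \sum_i |\kappa_i(E_j)| = \sum_i \sum_j |\kappa_i(E_j)| \le \sum_i |\kappa_i|(E)$ (using Tonelli to swap the nonnegative double sum), and taking the supremum over partitions yields the claim. Everywhere else the arguments are one-line consequences of monotonicity and countable subadditivity of a positive measure. I do not anticipate any real obstacle; the main point is simply to be careful that each inequality is applied to the positive measure $|\cdot|$ and that the set-complement identities ($B^c \subseteq A^c$ when $A\subseteq B$, and De Morgan for the countable intersection) are invoked correctly.
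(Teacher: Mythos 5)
Your proof is correct and follows essentially the same route as the paper: reduce each part to monotonicity, countable subadditivity, and the inequality $\left|\sum_i \kappa_i\right| \le \sum_i |\kappa_i|$ applied to the variation measure. The only difference is that you spell out the partition argument for that last inequality, which the paper simply asserts; this is a harmless (and welcome) addition, not a different approach.
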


\begin{proof}

To prove (i), notice that $|\mu|(A^c)=0$ and $B^c \subseteq A^c$ imply that $|\mu|(B^c) = 0$. Similarly, to prove (ii), observe that $|\mu|\left( \left( \cap_{i=1}^\infty C_i \right)^c \right) = |\mu|\left(\cup_{i=1}^\infty C_i^c \right) \le \sum_{i=1}^\infty |\mu|(C_i^c) = 0$ follows from $\sigma$-subadditivity of $\mu$. Proving (iii) requires verifying $|\mu|(A^c) \le |\nu|(A^c) = 0$. Finally, to prove (iv), show that $|\sum_{i=1}^\infty \kappa_i| (A^c) \le (\sum_{i=1}^\infty |\kappa_i|) (A^c) = \sum_{i=1}^\infty |\kappa_i| (A^c) = 0.$
\end{proof}

Knowing that every measure on $(X,\Sigma_X)$ is carried by $X$, one can see from (i) and (ii) of Proposition \ref{prop_carrier} that carriers of a fixed measure form a filter on the poset $(\Sigma_X, \subseteq)$; in fact, non-zero measures correspond to proper filters and these filters are stronger in the sense that countable intersections are allowed (normally, only finite intersections are allowed).

Also, parts (iii) and (iv) of Proposition \ref{prop_carrier} indicate that all vector measures (over field $\mb{R}$ or $\mb{C}$) carried by a fixed set form a Banach space under the finite total variation norm, and that all positive measures carried by a fixed set form a convex cone closed under countable addition.

\subsection*{Projections of Measures}
\begin{definition}\label{def_projection}
Let $\mu$ be a measure on measurable space $(X,\Sigma_X)$, and let $A \in \Sigma_X$. Then the \textbf{projection of $\mu$ onto $A$}, notated as $\pi_A \mu$, is the measure defined via $\pi_A \mu : B \mapsto \mu(B \cap A)$ for $B \in \Sigma_X$. If $\mc{M}_X$ is a space of measures on $(X,\Sigma_X)$, then we say that $\mc{M}_X$ is \textbf{closed under projections} if $\mu \in \mc{M}_X$ implies that $\pi_A \mu \in \mc{M}_X$ for all $A \in \Sigma_X$. 
\end{definition}

\begin{corollary}\label{cor_projection}
$\mu \sqsubset A$ if and only if $\pi_A \mu = \mu$.
\end{corollary}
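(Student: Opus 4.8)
The statement is an equivalence, so I would prove the two implications separately, keeping in mind that $\mu$ may be a vector measure, so the relevant condition is $|\mu|(A^c) = 0$ rather than merely $\mu(A^c) = 0$.

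For the forward direction, assume $\mu \sqsubset A$, i.e. $|\mu|(A^c) = 0$. Fix $B \in \Sigma_X$ and write $B = (B \cap A) \cup (B \cap A^c)$, a disjoint union, so $\mu(B) = \mu(B \cap A) + \mu(B \cap A^c)$ by finite additivity. Since $|\mu(B \cap A^c)| \le |\mu|(B \cap A^c) \le |\mu|(A^c) = 0$, the second term vanishes, giving $\mu(B) = \mu(B \cap A) = (\pi_A \mu)(B)$. As $B$ was arbitrary, $\pi_A \mu = \mu$.

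For the converse, assume $\pi_A \mu = \mu$. Then for every measurable $E \subseteq A^c$ we have $\mu(E) = (\pi_A \mu)(E) = \mu(E \cap A) = \mu(\emptyset) = 0$. In the positive case this already yields $\mu(A^c) = 0$, hence $\mu \sqsubset A$. In the vector case, recall that $|\mu|(A^c) = \sup \sum_{i} |\mu(E_i)|$, the supremum taken over finite (or countable) measurable partitions $\{E_i\}$ of $A^c$; since each $E_i \subseteq A^c$ forces $\mu(E_i) = 0$, every such sum is $0$, so $|\mu|(A^c) = 0$, i.e. $\mu \sqsubset A$.

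I do not anticipate a genuine obstacle here; the only point requiring a little care is invoking the definition of the variation measure in the converse for vector measures, so that $\mu(E) = 0$ for all measurable $E \subseteq A^c$ upgrades to $|\mu|(A^c) = 0$. Everything else is immediate from finite additivity and the definition of $\pi_A$.
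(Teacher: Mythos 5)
Your proof is correct and follows essentially the same route as the paper: the forward direction is the identical decomposition $\mu(B)=\mu(B\cap A)+\mu(B\cap A^c)$ with the second term killed by $|\mu|(A^c)=0$, and your converse (checking $\mu$ vanishes on measurable subsets of $A^c$ and summing over partitions) is just an unpacked version of the paper's one-line observation that $|\mu|(A^c)=|\pi_A\mu|(A^c)=0$. Your extra care with the variation measure in the vector-valued case is a welcome, if minor, elaboration.
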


\begin{proof}
$\mu \sqsubset A$ implies that $\mu(B) = \mu(A\cap B) + \mu(A^c \cap B) = \mu(A \cap B) + 0 = \pi_A \mu (B)$ for all $B \in \Sigma_X$, showing the forward direction. The equality $\pi_A \mu = \mu$ implies that $|\mu|(A^c) = |\pi_A \mu|(A^c) = 0$, showing reverse direction.
\end{proof}

The space of all finite measures (positive or vector-valued) on $(X,\Sigma_X)$ is easily verified to be closed under projections. However, there are other non-trivial examples of spaces of measures which are closed under projections. One example is the space of all non-atomic (positive) measures, as will be discussed below.

\subsection*{Atomic and Non-Atomic Measures}

We introduce atoms of measures, purely atomic measures and non-atomic measures, as defined and discussed by Johnson in \cite{Johnson}.

\begin{definition}\label{def_atom}
Let $(X,\Sigma)$ be a measurable space, let $\mu$ be a positive measure on that space, and let $A \in \Sigma$. Then \textbf{A is an atom of $\mu$} if $\mu(A)>0$ and for all $\varnothing \subseteq B \subseteq A$, $\mu(B) \in \{0,\mu(A)\}$. A \textbf{purely atomic} measure $\mu$ is a measure where every non-$\mu$-null set contains an atom. A \textbf{non-atomic} measure $\mu$ is a measure with no atoms.
\end{definition}
As a consequence of the definitions, if $\mu$ is non-atomic, then every set $A \in \Sigma$ with $\mu(A) > 0$ admits a proper, non-empty subset $B$ of $A$ with $0 < \mu(B) < \mu(A)$. 

Johnson showed, among other results, that any measure $\mu$ on a $\sigma$-ring is the sum of a purely atomic measure and a non-atomic measure, and that the decomposition is unique when $\mu$ is $\sigma$-finite.

~

\begin{example}\label{ex_atomic}
The Dirac measure $\delta_0$ on $\mb{R}$ with the $\sigma$-algebra of Borel subsets $\mc{B}(\mb{R})$ where $\delta (A) = 1$ if $0 \in A$ and $\delta (A) = 0$ if $0 \notin A$ has atom $\{0\}$. Notice that $\delta_0$ is also purely atomic. The Lebesgue measure $\lambda$ on $(\mb{R}, \mc{B}(\mb{R}))$ is a non-atomic measure because $\lambda$ is strictly-positive, inner-regular and compact sets have finite Lebesgue measure. The zero measure is vacuously purely atomic \emph{and} non-atomic. In our future setting, this will be the only such measure.
\end{example}

~

\begin{proposition}\label{prop_non-atomic_projection}
Let $\mu \in \mc{M}_X$ be non-atomic and let $A \in \Sigma_X$. Then $\pi_A \mu$ is non-atomic.
\end{proposition}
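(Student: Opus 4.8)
The plan is to show directly from the definitions that $\pi_A\mu$ has no atoms. Suppose toward a contradiction that $E\in\Sigma_X$ is an atom of $\pi_A\mu$, so $\pi_A\mu(E) = \mu(E\cap A) > 0$ and for every measurable $B\subseteq E$ we have $\mu(B\cap A)\in\{0,\mu(E\cap A)\}$. The natural move is to replace $E$ by $E\cap A$: since $(E\cap A)\subseteq E$ we still have $\pi_A\mu(E\cap A) = \mu(E\cap A) > 0$, and any measurable $B\subseteq E\cap A$ is in particular a subset of $E$, so $\mu(B) = \mu(B\cap A) \in \{0,\mu(E\cap A)\} = \{0,\mu(E\cap A)\}$. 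Hence $E\cap A$ is an atom of $\mu$ itself, contradicting the assumption that $\mu$ is non-atomic. This gives the result.

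A slightly cleaner way to phrase the same argument, which I would actually write, is to invoke the consequence of non-atomicity recorded just after Definition \ref{def_atom}: since $\mu$ is non-atomic and $\mu(E\cap A) > 0$, there is a measurable $B$ with $B\subseteq E\cap A$ and $0 < \mu(B) < \mu(E\cap A)$. Then $B\subseteq E$ and $\pi_A\mu(B) = \mu(B\cap A) = \mu(B)$, so $0 < \pi_A\mu(B) < \pi_A\mu(E)$, contradicting that $E$ is an atom of $\pi_A\mu$. I would present the proof in this form because it avoids re-deriving the "splitting" property and keeps everything to a couple of lines.

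I do not anticipate a genuine obstacle here; the only point requiring a little care is the quantifier in the definition of atom — an atom $E$ of $\pi_A\mu$ constrains $\pi_A\mu(B)$ for \emph{all} measurable $B\subseteq E$, and I must make sure the witnessing set $B$ produced from non-atomicity of $\mu$ is indeed a subset of $E$ (which it is, being a subset of $E\cap A\subseteq E$) and that its $\pi_A\mu$-value genuinely lies strictly between $0$ and $\pi_A\mu(E)$ (which follows because $B\subseteq A$ forces $\pi_A\mu(B) = \mu(B)$). One should also dispose of the trivial case $\mu(E\cap A) = 0$, but that is immediate since then $E$ fails the positivity requirement $\pi_A\mu(E) > 0$ for being an atom. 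Everything else is a direct unwinding of Definitions \ref{def_projection} and \ref{def_atom}.
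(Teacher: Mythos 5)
Your proof is correct. Both you and the paper rely on the same key device: non-atomicity of $\mu$ applied to a positive-$\mu$-measure subset of $A$ produces a set of strictly intermediate measure, and this transfers to $\pi_A\mu$ because a set contained in $A$ has the same $\mu$- and $\pi_A\mu$-measure. The difference is in scope. The paper's proof applies this splitting only to the whole space: it notes $\pi_A\mu(X)=\mu(A)>0$, splits $A$ into a set $B\subset A$ with $0<\mu(B)<\mu(A)$, and then concludes non-atomicity — which, as literally written, only rules out $X$ as an atom of $\pi_A\mu$, not an arbitrary one. You instead take an arbitrary candidate atom $E$ of $\pi_A\mu$, pass to $E\cap A$ (which has the same positive $\pi_A\mu$-measure), and either promote $E\cap A$ to an atom of $\mu$ or, in your preferred phrasing, use non-atomicity of $\mu$ on $E\cap A$ to produce $B\subseteq E\cap A\subseteq E$ with $0<\pi_A\mu(B)<\pi_A\mu(E)$, contradicting atomicity of $E$. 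Your quantification over all potential atoms is exactly what the definition of non-atomic demands, so your write-up is the complete version of the argument the paper only sketches; it costs nothing extra and closes the gap left by checking the single set $X$.
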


\begin{proof}
If $\pi_A \mu$ is the zero measure, then $\pi_A \mu$ is non-atomic by the previous example. Suppose instead that $\pi_A \mu$ is non-zero. Then $\pi_A \mu (X) = \mu(A\cap X) = \mu(A) > 0$. Since $\mu$ is non-atomic, there exists some set $B \in \Sigma_X$ such that $B \subset A$ and that $0 < \pi_A \mu (B) = \mu(B) < \mu(A) =\pi_A \mu (X)$. Therefore, $\pi_A \mu$ is non-atomic. With  $\mu$ and $A$ arbitrary, the proposition follows.
\end{proof}

\subsection*{Orthogonal Measures}
\begin{definition}\label{def_orthogonal}
If $\mu$, $\nu$ are measures in $\mc{M}_X$, then they are called \textbf{orthogonal}, written as $\mu \perp \nu$, if there exists $A \in \Sigma_X$ such that $\mu \sqsubset A$ and $\nu \sqsubset A^c$. A countable sequence of measures $\{\mu_n\}_{n=1}^\infty$ is called \textbf{(pairwise) orthogonal} if $\mu_i \perp \mu_j$ for $i \not= j$. 
\end{definition}
~

\begin{proposition}\label{prop_orthogonal_sets}
Let $\{\mu_i\}_{i=1}^\infty$ be an orthogonal sequence of measures in $\mc{M}_X$. Then there exists a pairwise disjoint sequence $\{S_i\}_{i=1}^\infty$ of measurable sets in $\Sigma_X$ such that $\mu_i \sqsubset S_i$ for all $i \in \mb{N}$.
\end{proposition}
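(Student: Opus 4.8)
The plan is to construct the disjoint sets $S_i$ directly from the witnesses to pairwise orthogonality, relying on the observation (Proposition~\ref{prop_carrier}(ii)) that the carriers of a fixed measure are closed under \emph{countable} intersections, not merely finite ones.

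First I would fix the orthogonality data in a symmetric way. For each pair $i<j$, Definition~\ref{def_orthogonal} supplies a set $A_{ij}\in\Sigma_X$ with $\mu_i\sqsubset A_{ij}$ and $\mu_j\sqsubset A_{ij}^c$; for $j<i$ I then \emph{define} $A_{ij}:=A_{ji}^c$. In this way one obtains, for every pair $i\neq j$, a measurable set $A_{ij}$ satisfying $\mu_i\sqsubset A_{ij}$ together with the symmetry relation $A_{ij}^c=A_{ji}$, so in particular $A_{ij}\cap A_{ji}=\varnothing$. (When $j<i$, the condition $\mu_i\sqsubset A_{ij}=A_{ji}^c$ is read off directly from the defining property of $A_{ji}$.)

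Next I would set $S_i:=\bigcap_{j\neq i}A_{ij}$. Each $S_i$ is a countable intersection of members of $\Sigma_X$, hence measurable, and since $\mu_i\sqsubset A_{ij}$ for every $j\neq i$, Proposition~\ref{prop_carrier}(ii) gives $\mu_i\sqsubset S_i$. For disjointness, take $i\neq k$: then $S_i\subseteq A_{ik}$ and $S_k\subseteq A_{ki}=A_{ik}^c$, so $S_i\cap S_k\subseteq A_{ik}\cap A_{ik}^c=\varnothing$. This produces the required pairwise disjoint sequence $\{S_i\}_{i=1}^\infty$.

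I do not expect a genuine obstacle here: the construction is essentially bookkeeping, the only real idea being to pass to the common refinement $S_i$ of all the half-spaces $A_{ij}$ that favor $\mu_i$. The one point that is not completely automatic is that the intersection defining $S_i$ ranges over infinitely many $j$, so the ordinary finite-intersection property of a filter would be insufficient; the argument genuinely uses the strengthened closure under countable intersections recorded in Proposition~\ref{prop_carrier}(ii). (If some $\mu_i$ happens to be the zero measure, every statement about its carriers is vacuous and causes no difficulty.)
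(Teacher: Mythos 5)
Your proof is correct and is essentially identical to the paper's own argument: the paper also symmetrizes the orthogonality witnesses (writing $T_{j,i}=T_{i,j}^c$, with $T_{i,i}:=X$ as a cosmetic device), takes $S_i$ to be the countable intersection, invokes Proposition~\ref{prop_carrier}(ii) for $\mu_i\sqsubset S_i$, and proves disjointness exactly as you do.
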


\begin{proof}
For $i < j$ in $\mb{N}$, let $T_{i,j}$ denote the measurable set with $\mu_i \sqsubset T_{i,j}$ and $\mu_j \sqsubset T_{i,j}^c$ guaranteed by $\mu_i \perp \mu_j$. We then define $T_{j,i} = T_{i,j}^c$. For technicality, define $T_{i,i} := X$. Then define $S_i := \cap_{j=1}^\infty T_{i,j}$ for all $i \in \mb{N}$.

By a Proposition \ref{prop_carrier} (ii), it follows that $\mu_i \sqsubset S_i$ for all $i \in \mb{N}$. Furthermore, for $i\not=j$ in $\mb{N}$, we have $S_i \cap S_j \subseteq T_{i,j} \cap T_{j,i} = T_{i,j} \cap T_{i,j}^c = \varnothing$. Therefore, $\{S_i\}_{i=1}^\infty$ is a pairwise disjoint sequence.
\end{proof}

\begin{proposition}\label{prop_orthogonal_sum}
Let $\{\mu_i\}_{i=1}^\infty$ be an orthogonal sequence of vector measures or of finite positive measures in $\mc{M}_X$ with pairwise disjoint carrier sequence $\{S_i\}_{i=1}^\infty$ guaranteed by Proposition \ref{prop_orthogonal_sets} such that $\sum_{i=1}^\infty ||\mu_i|| < \infty$. Then $\mu := \sum_{i=1}^\infty \mu_i$ is a vector measure or a finite positive measure such that $\pi_{S_i} \mu = \mu_i$ for all $i \in \mb{N}$.
\end{proposition}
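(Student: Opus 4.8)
The plan is to construct $\mu$ by pointwise summation, check that it is a measure, and then read off its projections directly from the carrier structure. First I would set $\mu(B) := \sum_{i=1}^\infty \mu_i(B)$ for each $B \in \Sigma_X$. Since $|\mu_i(B)| \le |\mu_i|(B) \le \|\mu_i\|$ and $\sum_{i=1}^\infty \|\mu_i\| < \infty$, this series converges absolutely, so $\mu$ is a well-defined set function with $\mu(\varnothing) = 0$, taking values in $[0,\infty)$ in the positive case and in the underlying field in the vector case.

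Next I would establish countable additivity. For a pairwise disjoint sequence $\{B_k\}_{k=1}^\infty$ in $\Sigma_X$, countable additivity of each $\mu_i$ gives
\begin{align*}
\mu\Bigl(\bigcup_{k=1}^\infty B_k\Bigr) = \sum_{i=1}^\infty \mu_i\Bigl(\bigcup_{k=1}^\infty B_k\Bigr) = \sum_{i=1}^\infty \sum_{k=1}^\infty \mu_i(B_k) .
\end{align*}
The double series is absolutely summable, since $\sum_{i=1}^\infty \sum_{k=1}^\infty |\mu_i(B_k)| \le \sum_{i=1}^\infty |\mu_i|\bigl(\bigcup_k B_k\bigr) \le \sum_{i=1}^\infty \|\mu_i\| < \infty$, so the order of summation may be interchanged, yielding $\sum_{k=1}^\infty \sum_{i=1}^\infty \mu_i(B_k) = \sum_{k=1}^\infty \mu(B_k)$. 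Hence $\mu$ is countably additive; it is finite because $\|\mu\| \le \sum_{i=1}^\infty \|\mu_i\| < \infty$ (in the positive case simply $\mu(X) = \sum_i \mu_i(X) < \infty$), so $\mu$ is a finite positive measure or a vector measure, as required. Note this first half does not use disjointness of the $\{S_i\}$, only the summability hypothesis.

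Finally I would verify $\pi_{S_i}\mu = \mu_i$. Fix $i$ and $B \in \Sigma_X$. For $j \ne i$, disjointness of the carriers gives $B \cap S_i \subseteq S_i \subseteq S_j^c$, and since $\mu_j \sqsubset S_j$ we get $|\mu_j(B \cap S_i)| \le |\mu_j|(S_j^c) = 0$, so every term with $j \ne i$ vanishes and $\pi_{S_i}\mu(B) = \mu(B \cap S_i) = \mu_i(B \cap S_i)$. By Corollary \ref{cor_projection}, $\mu_i \sqsubset S_i$ yields $\mu_i(B \cap S_i) = \pi_{S_i}\mu_i(B) = \mu_i(B)$. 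Since $B$ and $i$ were arbitrary, $\pi_{S_i}\mu = \mu_i$ for all $i \in \mb{N}$, completing the proof.

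I expect the only genuine obstacle to be the justification of the interchange of summation order in the additivity step; the rest is bookkeeping with the definitions of projection, orthogonality, and carriers together with Proposition \ref{prop_carrier} and Corollary \ref{cor_projection}. It is worth emphasizing that the hypothesis $\sum_i \|\mu_i\| < \infty$ is precisely what makes both the convergence of $\mu(B)$ and this interchange legitimate, so orthogonality alone would not suffice for the first half of the statement.
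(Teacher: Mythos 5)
Your proof is correct, and its second half (the computation $\pi_{S_i}\mu(B) = \mu_i(B\cap S_i) = \mu_i(B)$ using disjointness of the carriers and Corollary \ref{cor_projection}) is essentially identical to the paper's. Where you diverge is the existence step: the paper simply invokes the fact, noted after Proposition \ref{prop_carrier}, that the measures form a Banach space under the total variation norm, so $\sum_i \|\mu_i\| < \infty$ forces $\sum_i \mu_i$ to converge in that space and the limit is again a (vector or finite positive) measure. You instead construct $\mu$ pointwise and verify countable additivity by hand via the interchange of an absolutely summable double series. Both are valid; the paper's route is shorter and reuses an already-stated structural fact (with positivity of the limit following from setwise convergence), while yours is more elementary and self-contained, makes explicit that only the summability hypothesis -- not orthogonality -- is needed for $\mu$ to be a measure, and isolates the Fubini-for-series step as the one point requiring justification, which you supply. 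One small imprecision: in the vector case the values of $\mu_i$ lie in a Banach space rather than ``the underlying field,'' so the bounds $|\mu_i(B)|$ should be read as norms $\|\mu_i(B)\| \le |\mu_i|(B)$; the interchange of summation for absolutely convergent double series holds in a Banach space as well, so the argument is unaffected.
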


\begin{proof}
The set of all vector-valued measures and the set of all finite signed-valued measures each form a Banach space with respect to the total variation norm, so absolute convergence of $\sum_{i=1}^\infty ||\mu_i||$ implies convergence of $\sum_{i=1}^\infty \mu_i$; therefore, $\mu$ is a vector-valued measure or a finite positive measure. For all $i \in \mb{N}$ and $A \in \Sigma_X$, we have that $\pi_{S_i} \mu (A) = \mu(S_i \cap A) = \sum_{n=1}^\infty \mu_n (S_i \cap A) = \mu_i (S_i \cap A) = \mu_i (A)$, which implies that $\pi_{S_i} \mu = \mu_i$ for all $i \in \mb{N}$.
\end{proof}

\begin{definition}\label{def_sum_of_measures}
If a sequence of measures $(\mu_i)_{i=1}^\infty$ satisfies $\sum_{i=1}^\infty ||\mu_i|| < \infty$, then we call the finite measure $\mu := \sum_{i=1}^\infty \mu_i$ the \textbf{bounded sum of $\mathbf{(\mu_i)_{i=1}^\infty}$}.  A bounded sum $\mu = \sum_{i=1}^\infty \mu_i$ with $\{\mu_i\}_{i=1}^\infty$ being orthogonal as in Proposition \ref{prop_orthogonal_sum} will be called a \textbf{bounded orthogonal sum}.
\end{definition}

For this section, we finally describe a space of measures $\mc{M}_X$ which is suitable for dealing with transfunctions later.

\subsection*{Ample Spaces}

\begin{definition}\label{def_ample}
Let $(X,\tau)$ be a topological space. A space of finite positive measures or of vector measures $\mc{M}_X$ on $(X,\Sigma_X)$ is called \textbf{ample} if the following statements hold:
\begin{enumerate}[label=(\roman*)]
\item $\mc{M}_X$ is closed under projections;
\item $\mc{M}_X$ is closed under bounded orthogonal summations;
\item Every nonempty open set in $X$ carries some nonzero measure in $\mc{M}_X$.
\end{enumerate} 
\end{definition}

\begin{example}\label{ex_ample_Lebesgue}
	Let $\lambda$ be the Lebesgue measure on $(\mb{R},\mc{B}(\mb{R}))$, and let $\mc{M}_\lambda$ be the space of all measures that are projections of $\lambda$ onto Borel sets in $\mc{B}(\mb{R})$, that is, $\mc{M}_\lambda := \{\pi_A \lambda : A \in \mc{B}(\mb{R})\}$. Then it is easy to verify by observation and by Proposition 1.11 that $\mc{M}_\lambda$ is closed under projections and closed under bounded orthogonal summations. Additionally, $\lambda$ is strictly-positive, so it follows that $\mc{M}_\lambda$ is ample. Since $\lambda$ is $\sigma$-finite, the subset $\mc{M}_\lambda^F$ of finite measures from $\mc{M}_\lambda$ is also ample. By the argument of Proposition 1.9, it follows that both $\mc{M}_\lambda$ and $\mc{M}_\lambda^F$ only contain non-atomic measures.
\end{example}

\begin{example}\label{ex_ample_abs_cont}
Generalizing from Example \ref{ex_ample_Lebesgue}, let $\lambda$ be a finite strictly-positive measure on a topological measurable space $(X,\Sigma_X)$, and let $\mc{M}_\lambda := \{ \pi_A \lambda : A \in \Sigma_X \}$. By the same reasons as from Example 1.10, $\mc{M}_\lambda$ is an ample space of finite measures.
\end{example}

%
%

Ample spaces will be useful for transfunctions because we will want to decompose a domain measure into bounded orthogonal sums of projections and use local properties to determine the behavior of each projection. If the transfunctions are of a certain type, then summing the outputs will result in the output of the original measure.

As Examples \ref{ex_ample_Lebesgue} and \ref{ex_ample_abs_cont} illustrate, having finite strictly-positive measures on a space $(X,\Sigma_X)$ can be used to easily generate an ample space of measures. Such measures are guaranteed on second-countable locally compact Hausdorff spaces (see \cite{Bentley}) and on compact groups (any left or right Haar measure).



\section{Transfunctions}

From this point forward, we will assume that $X$ and $Y$ are second-countable topological spaces, that $\Sigma_X$ and $\Sigma_Y$ are collections of Borel subsets of $X$ and $Y$, respectively, and that any transfunction $\Phi: \mc{M}_X \to \mc{M}_Y$ will be defined on an ample space $\mc{M}_X$ unless otherwise specified. This will allow us to work with various properties regarding the input measures and the transfunction.

\begin{example}\label{ex_push_forward}
Let $f: X \to Y$ be a measurable function. Then the push forward operator
$f_\# : \mc{M}_X \to \mc{M}_Y $ defined by $f_\# (\mu) (B) := \mu (f^{-1}(B))$ is a transfunction. We identify the measurable function $f$ with the corresponding push-forward operator $f_\#$. 
\end{example}

\subsection*{Strongly $\sigma$-Additive and Weakly $\sigma$-Additive Transfunctions}

\begin{definition}\label{def_sigma_additive}
Let $\Phi: X \rsq Y$ be a transfunction with $\mc{M}_X$ closed under bounded orthogonal sums. We say that \textbf{$\Phi$ is weakly $\sigma$-additive} if every bounded orthogonal sum $\sum_{i=1}^\infty \mu_i$ in $\mc{M}_X$ merits that $\Phi(\sum_{i=1}^\infty \mu_i) = \sum_{i=1}^\infty \Phi \mu_i$.
As a consequence, $\Phi$ is also \textbf{weakly monotone}, which means that $\Phi\mu \le \Phi(\mu + \nu)$ for each orthogonal pair of measures $\mu$ and $\nu$. 
If $\mc{M}_X$ is closed under bounded sums and the equality persists when orthogonality is relaxed, then we say that $\Phi$ is \textbf{strongly $\sigma$-additive} and, by consequence, that $\Phi$ is \textbf{strongly monotone}. 
\end{definition}

Notice that $\sigma$-strong additivity implies $\sigma$-weak additivity.

\begin{example}\label{ex_measurable_is_sigma_week}
A measurable function $f_\#: X \rsq Y$ with $\mc{M}_X$ closed under bounded orthogonal sums is strongly $\sigma$-additive: if $\sum_{i=1}^\infty \mu_i$ is a bounded sum and $B \in \Sigma_Y$, then $f_\# \left(\sum_{i=1}^\infty \mu_i\right) (B) = \left(\sum_{i=1}^\infty \mu_i \right) (f^{-1} (B)) = \sum_{i=1}^\infty \mu_i ( f^{-1} (B)) = \sum_{i=1}^\infty f_\# \mu_i (B)$, so $f_\# \sum_{i=1}^\infty \mu_i = \sum_{i=1}^\infty f_\# \mu_i$. It follows that $f_\#$ is weakly $\sigma$-additive, which is sufficient for most of the theory in later sections.
\end{example}

In all of our important examples and theorems, we shall be dealing with weakly $\sigma$-additive transfunctions. 
This property is useful for models because such a transfunction can delegate its action among orthogonal projections and accumulate their individual actions afterwards. In transportation problems, one can view a weakly $\sigma$-additive transfunction as a transportation method which can be subdivided between different districts and locally performed by each region independent of each other. Thus, a leading supervisor on a national scale can delegate their major transportation plan among their employees in different states, then those employess can further delegate their plans among \emph{their} employees in different counties, and so on.

\begin{remark}\label{rem_spatial}In future sections, we shall often discuss the spatial relationship between a measure $\mu$ and its output $\Phi \mu$. If $A \subseteq X$ and $B \subseteq Y$ are measurable sets such that $\mu \sqsubset A$ implies $\Phi \mu \sqsubset B$, we shall notate this spatial relationship via ``$\Phi(A) \sqsubset B$". The following proposition, and the subsequent corollary, are frequently utilized facts for spatial relationships in future proofs.
\end{remark}

{\begin{proposition}\label{prop_spatial_relationship}
	Let $\Phi: X \rsq Y$ be a weakly $\sigma$-additive transfunction. Let $A,A'$ and the sequence $(A_i)_{i=1}^\infty$ be from $\Sigma_X$ and let $B, B'$ and the sequence $(B_j)_{j=1}^\infty$ be from $\Sigma_Y$.
\begin{enumerate}
\item[\rm{(i)}] If $\Phi(A) \sqsubset B$, if $A' \subseteq A$ and $B' \supseteq B$, then $\Phi(A') \sqsubset B'$;
\item[\rm{(ii)}]  If $\Phi(A_i) \sqsubset B_j$ for all $i,j \in \mb{N}$, then $\Phi(\cup_{i=1}^\infty A_i) \sqsubset \cap_{j=1}^\infty B_j$.
\item[\rm{(iii)}]  If $\Phi(A_i) \sqsubset B_i$ for all $i \in \mb{N}$, then $\Phi(\cap_{i=1}^\infty A_i) \sqsubset \cap_{j=1}^\infty B_j$ and $\Phi(\cup_{i=1}^\infty A_i) \sqsubset \cup_{j=1}^\infty B_j$.
\end{enumerate}
\end{proposition}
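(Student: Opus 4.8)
The plan is to reduce everything to the carrier bookkeeping already established in Proposition \ref{prop_carrier} together with the defining feature of weak $\sigma$-additivity, namely that $\Phi$ distributes over bounded orthogonal sums and is therefore weakly monotone. Part (i) is immediate: if $\mu \sqsubset A'$ and $A' \subseteq A$, then $\mu \sqsubset A$ by Proposition \ref{prop_carrier}(i), hence $\Phi\mu \sqsubset B$ by hypothesis, and then $\Phi\mu \sqsubset B'$ by Proposition \ref{prop_carrier}(i) again since $B \subseteq B'$. So $\Phi(A') \sqsubset B'$.

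For part (ii), the first step is to handle the ``$\cup$'' on the source side. Suppose $\mu \sqsubset \cup_{i=1}^\infty A_i$. I would like to write $\mu$ as a bounded orthogonal sum of pieces each carried by one of the $A_i$. The natural device is to disjointify: set $A_1' = A_1$ and $A_i' = A_i \setminus \cup_{k<i} A_k$, so the $A_i'$ are pairwise disjoint with the same union, and put $\mu_i = \pi_{A_i'}\mu$. Since $\mc{M}_X$ is ample it is closed under projections, so each $\mu_i \in \mc{M}_X$; the $\mu_i$ are orthogonal (disjoint carriers), and $\sum_i \|\mu_i\| \le \|\mu\| < \infty$, so $\sum_i \mu_i$ is a bounded orthogonal sum. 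One checks $\sum_i \mu_i = \mu$ because $\mu \sqsubset \cup_i A_i' $ makes the tails vanish. Now weak $\sigma$-additivity gives $\Phi\mu = \sum_i \Phi\mu_i$. Each $\mu_i \sqsubset A_i' \subseteq A_i$, so by part (i) $\Phi\mu_i \sqsubset B_j$ for every $j$; by Proposition \ref{prop_carrier}(ii), $\Phi\mu_i \sqsubset \cap_{j=1}^\infty B_j$ for every $i$; and then by Proposition \ref{prop_carrier}(iv) the bounded sum $\Phi\mu = \sum_i \Phi\mu_i$ satisfies $\Phi\mu \sqsubset \cap_{j=1}^\infty B_j$. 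As $\mu$ was an arbitrary measure carried by $\cup_i A_i$, this is exactly $\Phi(\cup_{i=1}^\infty A_i) \sqsubset \cap_{j=1}^\infty B_j$.

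For part (iii), both conclusions follow from (i) and (ii). For the intersection statement: from $\Phi(A_i) \sqsubset B_i$ and $\cap_k A_k \subseteq A_i$, part (i) gives $\Phi(\cap_k A_k) \sqsubset B_i$ for every $i$; then apply (ii) with the constant source sequence $\cap_k A_k$ (so its union is itself) and target sequence $(B_j)_j$ to get $\Phi(\cap_k A_k) \sqsubset \cap_j B_j$. For the union statement: from $\Phi(A_i) \sqsubset B_i$ and $B_i \subseteq \cup_j B_j$, part (i) gives $\Phi(A_i) \sqsubset \cup_j B_j$ for every $i$; then apply (ii) with source sequence $(A_i)_i$ and constant target sequence $\cup_j B_j$ to get $\Phi(\cup_i A_i) \sqsubset \cup_j B_j$.

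The only step with any real content is the disjointification argument inside (ii): one must make sure the projected pieces genuinely lie in $\mc{M}_X$ (this is where ampleness, specifically closure under projections, is used), that they form a legitimate bounded orthogonal sum so weak $\sigma$-additivity applies, and that they actually reassemble to $\mu$. Everything else is a routine invocation of Proposition \ref{prop_carrier}. I would expect to spell out the reassembly $\sum_i \pi_{A_i'}\mu = \mu$ carefully, since that is the one place an unwary reader might worry about convergence or about sets outside $\cup_i A_i$.
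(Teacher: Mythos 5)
Your proposal is correct and follows essentially the same route as the paper's proof: part (i) by two applications of Proposition \ref{prop_carrier}(i), part (ii) by disjointifying the $A_i$, projecting $\mu$ onto the pieces, and invoking weak $\sigma$-additivity together with Proposition \ref{prop_carrier}(ii) and (iv), and part (iii) by combining (i) and (ii) exactly as the paper does. Your extra care about closure under projections and the reassembly $\sum_i \pi_{A_i'}\mu = \mu$ is a slightly more explicit rendering of a step the paper leaves implicit, but it is not a different argument.
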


\begin{proof} 
	(i): Let $\mu \sqsubset A'$. Using Proposition \ref{prop_carrier} (i) twice and the definition of $\Phi(A) \sqsubset B$, we have that $\mu \sqsubset A$, so $\Phi\mu \sqsubset B$ and $\Phi\mu \sqsubset B'$. Therefore (i) is shown. 
	
	(ii): Let $\mu \in \mc{M}_X$ such that $\mu \sqsubset \cup_{i=1}^\infty A_i$. Then define the sequence $(T_i)_{i=1}^\infty$ by $T_i = A_i - \cup_{n<i} A_n$. Note that $T_i \subseteq A_i$ for all natural $i$ and that $\{T_i\}_{i=1}^\infty$ is a pairwise disjoint set sequence with $\uplus_{i=1}^\infty T_i = \cup_{i=1}^\infty A_i$. Fix $i,j \in \mb{N}$. It follows from part (i) that $\Phi(T_i) \sqsubset B_j$. Since $\pi_{T_i} \mu \sqsubset T_i$, we obtain $\Phi(\pi_{T_i} \mu) \sqsubset B_j$. Since $j$ was arbitrary, by Proposition \ref{prop_carrier} (ii) we have that $\Phi(\pi_{T_i} \mu) \sqsubset \cap_{i=1}^\infty B_j$. Since $i$ was arbitrary, by $\sigma$-weak additivity of $\Phi$ and by Proposition \ref{prop_carrier} (iv) it follows that 
	$$
	\Phi(\mu) = \Phi\left(\sum_{i=1}^\infty \pi_{T_i} \mu\right) = \sum_{i=1}^\infty \Phi(\pi_{T_i} \mu) \sqsubset \bigcap_{i=1}^\infty B .
	$$
	
	(iii): $\Phi(\cap_{i=1}^\infty A_i) \sqsubset B_i$ and $\Phi(A_i) \sqsubset \cup_{i=1}^\infty B_i$ for all $i \in \mb{N}$ by part (i), then by part (ii) we have that $\Phi(\cap_{i=1}^\infty A_i) \sqsubset \cap_{j=1}^\infty B_j$ and $\Phi(\cup_{i=1}^\infty A_i) \sqsubset \cup_{j=1}^\infty B_j$.
\end{proof}


The following will be useful in the characterization of continuous functions:

\begin{corollary}\label{cor_spatial_relationship}
	Let $\Phi: X \rsq Y$ be a weakly $\sigma$-additive transfunction. Let $U$ be open in $X$ with open cover $\{S_i : i \in I\}$, and let $B$ be measurable in $Y$. Then $\Phi(S_i)\sqsubset B$ for all $i \in I$ implies that $\Phi(U)\sqsubset B$. In particular, if $\mu$ is a measure on $X$ and if $\Phi(\pi_{S_i} \mu) \sqsubset B$ for all $i \in I$, then $\Phi(\pi_U \mu) \sqsubset B$.
\end{corollary}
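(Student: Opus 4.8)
The plan is to reduce the statement to Proposition \ref{prop_spatial_relationship} (ii) by passing from an arbitrary open cover to a countable subcover, which is available because $X$ is second-countable and hence hereditarily Lindel\"of. First I would invoke second-countability of $X$: the open set $U$, with its subspace topology, is itself second-countable, so the open cover $\{S_i : i \in I\}$ of $U$ admits a countable subcover $\{S_{i_k} : k \in \mb{N}\}$ with $U = \bigcup_{k=1}^\infty S_{i_k}$. (If the index set $I$ is already finite or countable there is nothing to do; if some $S_{i_k}$ repeat or the subcover is finite, pad the sequence with copies to make it an honest $\mb{N}$-indexed sequence, or simply note that Proposition \ref{prop_spatial_relationship} (ii) applies verbatim to finite families as well.)

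Next I would apply the hypothesis $\Phi(S_i) \sqsubset B$ to each member of the countable subcover, giving $\Phi(S_{i_k}) \sqsubset B$ for all $k \in \mb{N}$. Now set $A_k := S_{i_k}$ and $B_j := B$ for every $j$ in the notation of Proposition \ref{prop_spatial_relationship} (ii); then $\Phi(A_k) \sqsubset B_j$ holds for all $k, j \in \mb{N}$, so that proposition yields $\Phi\bigl(\bigcup_{k=1}^\infty S_{i_k}\bigr) \sqsubset \bigcap_{j=1}^\infty B = B$, i.e. $\Phi(U) \sqsubset B$. This proves the first assertion.

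For the ``in particular'' clause, suppose $\mu \in \mc{M}_X$ and $\Phi(\pi_{S_i}\mu) \sqsubset B$ for all $i \in I$. The point is that $\pi_U \mu$ is carried by $U = \bigcup_k S_{i_k}$, so I can run the same disjointification as in the proof of Proposition \ref{prop_spatial_relationship} (ii): put $T_k := S_{i_k} \setminus \bigcup_{n<k} S_{i_n}$, so the $T_k$ are pairwise disjoint, $T_k \subseteq S_{i_k}$, and $\biguplus_k T_k = U$. Since $\pi_{T_k}\mu = \pi_{T_k}(\pi_{S_{i_k}}\mu) \sqsubset T_k$ and $\Phi$ is weakly monotone (a consequence of weak $\sigma$-additivity), from $\Phi(\pi_{S_{i_k}}\mu) \sqsubset B$ we get $\Phi(\pi_{T_k}\mu) \sqsubset B$; alternatively this is just part (i) of Proposition \ref{prop_spatial_relationship} applied to the measure $\pi_{S_{i_k}}\mu$. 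Then $\pi_U\mu = \sum_{k=1}^\infty \pi_{T_k}\mu$ is a bounded orthogonal sum, so weak $\sigma$-additivity of $\Phi$ together with Proposition \ref{prop_carrier} (iv) gives $\Phi(\pi_U\mu) = \sum_k \Phi(\pi_{T_k}\mu) \sqsubset B$.

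The only genuine subtlety is the first step — extracting a \emph{countable} subcover — and this is exactly where the standing hypothesis that $X$ is second-countable is used; without it the corollary would fail, since Proposition \ref{prop_spatial_relationship} only handles countable unions. Everything after that is bookkeeping: reindexing the subcover as a sequence and quoting Proposition \ref{prop_spatial_relationship} (ii) (for the set version) or rerunning its disjointification argument (for the measure version). I would keep the write-up short, essentially: ``By second-countability of $X$, reduce to a countable subcover; apply Proposition \ref{prop_spatial_relationship} (ii) with $B_j = B$.''
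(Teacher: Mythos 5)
Your proposal is correct and follows essentially the same route as the paper: extract a countable subcover via second-countability, apply Proposition \ref{prop_spatial_relationship} (ii) with $B_j = B$ for the first claim, and reuse the disjointification $T_k = S_{i_k} \setminus \bigcup_{n<k} S_{i_n}$ together with weak monotonicity and Proposition \ref{prop_carrier} (iii)--(iv) for the measure version. The only quibble is your parenthetical ``alternatively this is just part (i) of Proposition \ref{prop_spatial_relationship}'': part (i) concerns the spatial relation $\Phi(A) \sqsubset B$ (a statement about \emph{all} measures carried by $A$), which is not what the hypothesis $\Phi(\pi_{S_i}\mu) \sqsubset B$ for a single $\mu$ provides — but your primary argument via weak monotonicity is exactly the paper's and needs no such appeal.
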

\begin{proof}
	By second-countability of $X$, we have that $U$ has a countable subcover $\{S_{i(n)} : n \in \mb{N}\}$, then apply Proposition \ref{prop_spatial_relationship} (ii) by setting $A_i = S_{i(n)}$ and $B_j = B$ to show the first claim. For the second claim, note that the partition $(T_i)_{i=1}^\infty$ from the proof of Proposition \ref{prop_spatial_relationship} (ii) satisfies $\Phi(\pi_{T_i} \mu) \le \Phi(\pi_{A_i} \mu)$ for all $i \in \mb{N}$ by weak monotonicity of $\Phi$. Then $\Phi(\pi_U \mu) = \sum_{i=1}^\infty \Phi(\pi_{T_i} \mu) \le \sum_{i=1}^\infty \Phi(\pi_{A_i} \mu) \sqsubset B$ by Proposition \ref{prop_carrier} (iv). So $\Phi(\pi_U \mu) \sqsubset B$ by Proposition \ref{prop_carrier} (iii).
\end{proof}

Since $\mb{R}^N$ and $\mb{C}^N$ are important examples of second-countable spaces with small open sets, Corollary \ref{cor_spatial_relationship} ensures that $\sigma$-weak additivity is enough to guarantee that transfunctions behave only on a local scale; that this, there are no open sets `large enough' to have positive measure without some local `small' open subset also having positive measure.

\subsection*{Support of a Transfunction}

\begin{definition}\label{def_vanish}
A transfunction $\Phi : X \rsq Y$ is said to \textbf{vanish on open set $U$} if $\Phi(U)\sqsubset \varnothing$. Let $\mc{V}_{\Phi}$ denote the collection of vanishing sets of $\Phi$.
\end{definition}
In a general setting, this statement may be vacuously true. However, in our setting, $\Phi$ is defined on an ample space of measures, so the definition is meaningful. Is the union of vanishing sets $\cup\mc{V}_{\Phi}$ also a vanishing set? In our setting, it is true for weakly $\sigma$-additive transfunctions via Corollary \ref{cor_spatial_relationship} because $\cup \mc{V}_\Phi$ has a countable subcover.

%

\begin{definition}\label{def_null_space_spatial_support}
Let $\Phi : X \rsq Y$ be a weakly $\sigma$-additive. Then $\cup \mc{V}_\Phi$ is called the \textbf{null space of $\Phi$}, notated as $\nulls\Phi$. Its complement, $(\cup \mc{V}_\Phi)^c$, is called the \textbf{spatial support of $\Phi$}, notated as $\supp\Phi$.
\end{definition}

Observe that 
\begin{align*}
\Phi \mu = \Phi \left(\pi_{\displaystyle \nulls\Phi}\mu + \pi_{\displaystyle \supp\Phi} \mu\right) = \Phi \left(\pi_{\displaystyle \nulls\Phi}\mu\right) + \Phi\left(\pi_{\displaystyle \supp\Phi} \mu \right) = \Phi\left(\pi_{\displaystyle \supp\Phi} \mu \right),
\end{align*}
which implies that $\Phi$ is essentially a transfunction between the subspace $\supp\Phi$ and $Y$, that is, $\Phi : \supp\Phi \rsq Y$. This realization justifies calling $\supp\Phi$ the spatial support of $\Phi$.\\

\begin{definition}\label{def_non-vanishing}
A transfunction $\Phi : X \rsq Y$ is called \textbf{non-vanishing} if $\Phi$ has no non-empty vanishing sets, that is, if $\supp\Phi = X$. Furthermore, $\Phi$ is \textbf{norm-preserving} if $||\Phi \mu|| = ||\mu||$ for all $\mu$ on $X$.
\end{definition}

The ample and non-vanishing characteristics are important for the main theorems later. In transportation problems (Monge-Kantorovich), we will usually only consider transfunctions that are norm-preserving. However, in other applications like population dynamics, it is reasonable to abandon norm-preservation and even non-vanishment.

\section{Localized Transfunctions}
\subsection*{Definitions and Examples}
At this point, we make a further assumption that $X$ and $Y$ are metric spaces. For each of the scenarios below, let $\Phi: X \rsq Y$ denote a transfunction, and let $f: X \to Y$ either denote a function without structure or a measurable function depending on the context.

We would like to use the devices from continuity of functions to help develop a theory on transfunctions that behave in a local manner. In particular, we use the version of continuity of functions between metric spaces which states that $f$ is continuous at $x$ if and only if
\begin{align*}
\forall \eps>0  ~ \exists \delta > 0 : A \subseteq B(x;\delta) \implies f(A) \subseteq B(y;\eps) ,
\end{align*}
or, when written in a simpler way, that
\begin{align*}
\forall \eps>0 ~  \exists \delta > 0 :  f(B(x;\delta)) \subseteq B(f(x);\eps) .
\end{align*}
We shall adapt this statement to ``localized" transfunctions by replacing subsets with carried measures: that is, we will replace ``$A \subseteq B(x;\delta)$" with ``$\mu \sqsubset B(x;\delta)$ and ``$f(A) \subseteq B(y;\eps)$" with ``$\Phi(\mu) \sqsubset B(y;\eps)$".

\begin{definition}\label{def_localized}
Let $x\in X$. Then we say that a transfunction \textbf{$\Phi$ is localized at $x$} if there exists $\delta, \eps > 0$ and $y \in Y$ such that 
$$\Phi(B(x,\delta )) \sqsubset B(y,\eps ).$$  
When $\eps$ needs emphasis, we say that \textbf{$\Phi$ is $\eps$-localized at $x$}. If $y$ also needs emphasis, we say that \textbf{$\Phi$ is $\eps$-localized at $(x,y)$}. We rarely need to emphasize $\delta$, but when necessary, we say that \textbf{$\Phi$ is $(\delta,\eps)$-localized at $(x,y)$}. We will always use one of the four phrases above when stating localization of $\Phi$. 

We will say that \textbf{$\Phi$ is $0$-localized at $x$} if $\Phi$ is $\eps$-localized at $x$ for all $ \eps > 0$. Although this definition has the same notation as  the previous definition, there will be no confusion. $0$-localized transfunctions have a special role in classifying continuous functions. Note in the definition of $0$-localization that the values for $\delta$ and $y$ may depend on $\eps$.
\end{definition}

\begin{definition}\label{def_localized_on_set}
If $\Phi$ is localized at every point in some set $A \in \Sigma_X$, then $\Phi$ is said to be \textbf{localized on $A$}. We define a function $E := E_\Phi: X \to [0,\infty]$ via 
$$
E(x) := \inf \{\eps : \Phi \text{ is } \eps\text{-localized at } x\}
$$
which measures how localized $\Phi$ can be throughout $X$, and we say that \textbf{$\Phi$ is $E^+$-localized}, which means that $\Phi$ is $(E(x)+\eta)$-localized at $x$ for all $\eta > 0$ whenever $E(x) < \infty$, and that $\Phi$ is not localized at $x$ when $E(x) = \infty$.
\end{definition}

\begin{definition}\label{def_localized_via_function}
Let $A \subseteq X$ and let $f: X \to Y$ be function. We say that \textbf{$\Phi$ is $\eps$-localized on A via $f$} or that \textbf{$\Phi$ is $\eps$-close to $f$ on $A$} if $\Phi$ is $\eps$-localized at $(x,f(x))$ for all $x\in A$. A similar definition is made by replacing $\eps$ with $E^+$ or with $0$.
\end{definition}

\begin{definition}\label{def_uniformly_localized}
Let $A \subseteq X$. We say that \textbf{$\Phi$ is uniformly localized on $A$} if there exist $\eps>0$ and $\delta >0$ such that $\Phi$ is $(\delta,\eps)$-localized on $A$. If $\delta$ and $\eps$ are to be emphasized, then we say that \textbf{$\Phi$ is uniformly $(\delta,\eps)$-localized on $A$}. If only $\eps$ is to be emphasized, then we say that \textbf{$\Phi$ is uniformly $\eps$-localized on $A$}.
\end{definition}

It is worth noting that transfunctions are not necessarily localized anywhere, and the list of definitions above give rise to a lot of examples of transfunctions which will appear below; many of these examples are made clear by the following short proposition.\\

\begin{proposition}\label{prop_commute_carrier_with_inverse_function}
Let $f$ be a measurable function, let $\mu \in \mc{M}_X$ be a positive measure or a vector measure and let $A \in \Sigma_X$. Then $|f_\# \mu| \le f_\#|\mu|$. If $\mu$ is a positive measure, then $\mu \sqsubset f^{-1}(A)$ if and only if $f_\# \mu \sqsubset A$, and if $\mu$ is a vector measure, the forward implication holds.
\end{proposition}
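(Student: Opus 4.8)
The plan is to verify the three claims in order, working directly from the definitions of variation measure, push-forward, and carrier (Definition \ref{def_carrier}).

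\textbf{Step 1: the inequality $|f_\#\mu| \le f_\#|\mu|$.} For a vector measure $\mu$ and a set $B \in \Sigma_Y$, take any measurable partition $B = \biguplus_k B_k$. Then $\sum_k |f_\#\mu(B_k)| = \sum_k |\mu(f^{-1}(B_k))|$, and since $\{f^{-1}(B_k)\}_k$ is a measurable partition of $f^{-1}(B)$, this is at most $|\mu|(f^{-1}(B)) = f_\#|\mu|(B)$. Taking the supremum over all partitions of $B$ gives $|f_\#\mu|(B) \le f_\#|\mu|(B)$, which is the first claim. (When $\mu$ is already positive, $|\mu| = \mu$ and $|f_\#\mu| = f_\#\mu$, so this is an equality in that case, which will be used below.)

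\textbf{Step 2: the positive case, both directions.} Suppose $\mu$ is positive. Then $f_\#\mu$ is positive, and for $A \in \Sigma_Y$ we have $f_\#\mu(A^c) = \mu(f^{-1}(A^c)) = \mu((f^{-1}(A))^c)$. By Definition \ref{def_carrier}, $\mu \sqsubset f^{-1}(A)$ means exactly $\mu((f^{-1}(A))^c) = 0$, and $f_\#\mu \sqsubset A$ means exactly $f_\#\mu(A^c) = 0$; the displayed identity shows these two conditions are literally the same statement, giving the equivalence.

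\textbf{Step 3: the vector case, forward implication.} Suppose $\mu$ is a vector measure with $\mu \sqsubset f^{-1}(A)$, i.e. $|\mu|((f^{-1}(A))^c) = 0$. Since $|\mu|$ is a positive measure, Step 2 (applied to $|\mu|$ in place of $\mu$) gives $f_\#|\mu| \sqsubset A$, i.e. $f_\#|\mu|(A^c) = 0$. By Step 1, $|f_\#\mu|(A^c) \le f_\#|\mu|(A^c) = 0$, so $|f_\#\mu|(A^c) = 0$, which by Definition \ref{def_carrier} is precisely $f_\#\mu \sqsubset A$.

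None of the steps presents a genuine obstacle; the only point requiring a moment's care is Step 1, where one must remember that the variation $|f_\#\mu|(B)$ is a supremum over partitions of $B$ and that pulling back a partition of $B$ yields a partition of $f^{-1}(B)$ (but not every partition of $f^{-1}(B)$ arises this way, which is exactly why only an inequality, not equality, holds in general and why the reverse implication can fail for vector measures — cancellation in $f_\#\mu$ can occur on fibers even when $|\mu|$ charges $(f^{-1}(A))^c$).
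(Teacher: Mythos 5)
Your proof is correct and follows essentially the same route as the paper: the same partition-pullback supremum argument for $|f_\#\mu| \le f_\#|\mu|$, the same identity $f_\#\mu(A^c) = \mu((f^{-1}(A))^c)$ for the positive-measure equivalence, and the same reduction of the vector case to the positive case applied to $|\mu|$ combined with the inequality (the paper just cites Proposition \ref{prop_carrier} (iii) where you evaluate at $A^c$ directly).
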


\begin{proof}
To show the first claim, notice that for all $B \in \Sigma_X$,
\begin{align*}
|f_\# \mu| (B) &= \sup \left\{ \textstyle \sum_{i=1}^n ||f_\# \mu (B_i)|| : \uplus_{i=1}^n B_i = B \right\} \\ 
&= \sup \left\{ \textstyle \sum_{i=1}^n ||\mu (f^{-1} (B_i))|| : \uplus_{i=1}^n f^{-1} (B_i) = f^{-1} (B) \right\} \\ 
&\le |\mu| (f^{-1} B) = f_\# |\mu| (B).
\end{align*}

The second claim then follows for $\mu$ positive from
$$
\mu \sqsubset f^{-1}(A) \Leftrightarrow 0 = \mu((f^{-1} (A))^c) = \mu(f^{-1}(A^c)) = \mu \circ f^{-1} (A^c) = 0 \Leftrightarrow f_\# \mu \sqsubset A,
$$
and when $\mu$ is a vector measure, the forward statement follows from the first claim with Proposition \ref{prop_carrier} (iii).
\end{proof}

\begin{example}\label{ex_localized_continuous}
Let $f: X \to Y$ be continuous. Then $\Phi := f_\#$ will have $E = \mf{0}$. To show this, let $x \in X$ and let $\eps > 0$. Then there exists $\delta>0$ such that $B(x; \delta) \subseteq f^{-1} B(f(x); \eps)$. Therefore, $\mu \sqsubset B(x;\delta) \subseteq f^{-1} B(f(x);\eps)$ implies from Proposition \ref{prop_commute_carrier_with_inverse_function} that $\Phi \mu = f_\# \mu \sqsubset B(f(x);\eps)$. Therefore, $E(x) \le \eps$ for every $\eps > 0$ and $x \in X$. Hence, $E(x) = 0$ for all $x \in X$, showing that $E= \mf{0}$. 
\end{example}

\begin{example}\label{ex_localized_Heaviside}
Let $H:= H_0: \mb{R} \to \mb{R}$ be the Heaviside function centered at $0$. Then define $\Phi := H_\#$. Since $H$ is continuous everywhere except at $0$, it follows that $E(x) = 0$ for all $x \not= 0$ by Example \ref{ex_localized_continuous}. However, consider when $x = 0$. Let $\delta > 0$. Since $H^{-1}(0) = (-\infty,0)$ and $H^{-1}(1) = [0, \infty)$, and since $B(0;\delta) \not\subseteq (-\infty,0)$ and $B(0;\delta) \not\subseteq [0,\infty)$, it follows that $B(0;\delta) \subseteq H^{-1} A$ only when $\{0,1\} \subseteq A$. As a result, $H_\#(B(0;\delta))\sqsubset A$ only when $\{0,1\} \subseteq A$. In order for $A$ to be an $\eps$-ball containing $0$ and $1$, it must follow that $\eps > 1/2$. Therefore, $E(0) = 1/2$. Hence, $E(x) = \frac12 \delta_0(x)$ for all $x\in \mb{R}$. 
\end{example}

\begin{example}\label{ex_localized_sum_of_Heavisides}
Consider the measurable function $g: \mb{R} \to \mb{R}$ via $g = \sum_{n=0}^\infty 2^n H_n$, and define $\Phi = g_\#$. Then $E(n) = 2^{n-1}$ for each $n \in \mb{N}$ and $E(x) = 0$ for $x \in \mb{R} - \mb{N}$, meaning that $g_\#$ is localized on $\mb{R}$ but that $\sup_{x \in \mb{R}} E(x) = \infty$. 
\end{example}

\begin{example}\label{ex_projection_localized}
Let $A \in \Sigma_X$. Then the projection transfunction $\Phi: \mu \mapsto \pi_A \mu$ is $0$-localized via the identity function $i: X \to X$ since every carrier of $\mu$ is also a carrier of $\pi_A \mu$. $\Phi$ is also strongly $\sigma$-additive.

\end{example}

\begin{example}\label{ex_localized_nowhere}
Let $Y = \mb{R}$ and $\Sigma_Y = \mc{B}(\mb{R})$. Let $\{q_n\}_{n=1}^\infty$ be an enumeration of $\mb{Q}$ and let $\nu \sqsubset \mb{Q}$ be a measure defined via $\nu (A) = \sum_{q_n \in A} 2^{-n}$. Then define $\Phi: \mu \mapsto ||\mu|| \nu$. Then $\Phi \mu \sqsubset A$ implies that $A \supseteq \mb{Q}$ whenever $\mu$ is non-zero. Since $\mb{Q}$ cannot be contained in any $\eps$-ball, it follows that $\Phi$ is not localized anywhere.
\end{example}

\begin{example}\label{ex_localized_convolution}
	Let $X=Y=\mb{R}^d$ and let $\lambda^d$ be the Lebesgue measure on $\mb{R}^d$. For some $\eps > 0$, define $\kappa := \pi_{B(0;\eps)}\lambda^d$. Then $\Phi: \mu \mapsto \mu \ast \kappa$ is a strongly $\sigma$-additive transfunction which is $\eps$-localized on $X$ via the identity map $i: X \to X$ and is norm-preserving when $||\kappa||=1$. One sees that $E=\eps$ on $X$ and that $\kappa$ scales and smoothes or diffuses $\mu$ to obtain $\Phi \mu$. $\Phi$ is not identified with any function; hence, $\Phi$ does not resemble any traditional transport mapping. 
\end{example}

\begin{example}\label{ex_localized_convolution_and_function}
	Continuing Example \ref{ex_localized_convolution}, let us additionally consider a continuous function $f:X \to X$. Then the transfunctions $\Psi_1 \mu \mapsto (f_\# \mu) \ast \kappa$ and $\Psi_2 \mu \mapsto f_\# (\mu \ast \kappa)$ are strongly $\sigma$-additive and localized on $X$. $E_{\Psi_1} = \eps$ on $X$ but $E_{\Psi_2}$ is not easy to calculate as it depends on $f$. One could also consider $f$ to be measurable with $\mu \mapsto f_\# \mu$ being $E^+$-localized. Then $\Psi_1$ is $(E+\eps)^+$-localized and maintains its other properties. 

\end{example}

If $U$ denotes the set of points in $X$ where $\Phi$ is localized, then the previous examples show that $E|_U: U \to [0,\infty)$ does not have to be continuous. However, as the next proposition states, it does have to be upper-semi-continuous on $U$, implying that $U$ is an open set. \\

\begin{proposition}\label{prop_upper_semi_continuous}
Let $U$ and $E$ be defined as above. Then $E|_U$ is an upper-semi continuous function and $U$ is an open set.
\end{proposition}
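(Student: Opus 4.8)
The plan is to establish both conclusions simultaneously by proving the following local statement: for every $x_0 \in U$ and every real $c > E(x_0)$ there is a radius $r > 0$ with $B(x_0,r) \subseteq U$ and $E(x) \le c$ for all $x \in B(x_0,r)$. Granting this, openness of $U$ is immediate, since each point of $U$ then has a ball-neighborhood contained in $U$; and $E|_U$ is upper-semi-continuous because the displayed statement is exactly the $\limsup$ characterization, $\limsup_{x\to x_0} E(x) \le E(x_0)$ at every $x_0 \in U$.

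I would first record a monotonicity remark: if $\Phi$ is $\eps'$-localized at a point and $\eps' < \eps$, then $\Phi$ is $\eps$-localized there, since a witness $\Phi(B(x,\delta))\sqsubset B(y,\eps')$ upgrades to $\Phi(B(x,\delta))\sqsubset B(y,\eps)$ through $B(y,\eps')\subseteq B(y,\eps)$ and Proposition \ref{prop_carrier}(i). Hence for any $x_0 \in U$ and any $\eps > E(x_0)$, $\Phi$ is genuinely $\eps$-localized at $x_0$ (the infimum defining $E$ need not be attained, but every value strictly above it is realized). Now fix $x_0 \in U$ and $c > E(x_0)$, pick $\eps$ with $E(x_0) < \eps < c$, and choose $\delta > 0$ and $y_0 \in Y$ with $\Phi(B(x_0,\delta)) \sqsubset B(y_0,\eps)$. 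For any $x \in B(x_0,\delta/2)$ the triangle inequality gives $B(x,\delta/2)\subseteq B(x_0,\delta)$, so by Proposition \ref{prop_spatial_relationship}(i) — equivalently, by Proposition \ref{prop_carrier}(i) applied to each $\mu \sqsubset B(x,\delta/2)$, which is then carried by $B(x_0,\delta)$ — we get $\Phi(B(x,\delta/2))\sqsubset B(y_0,\eps)$. Thus $\Phi$ is $(\delta/2,\eps)$-localized at $(x,y_0)$, so $x \in U$ and $E(x) \le \eps < c$; taking $r = \delta/2$ completes the local statement.

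I do not expect a genuine obstacle here: this is essentially the standard fact that ``continuity survives shrinking the domain ball,'' transported from inclusions of sets to the relation $\sqsubset$, and the only delicate point is that the infimum defining $E$ may fail to be attained, which the monotonicity remark handles. It is also worth noting the degenerate case $U = \varnothing$, in which both claims hold trivially.
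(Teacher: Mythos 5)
Your proof is correct and takes essentially the same route as the paper's: fix $\eps$ strictly above $E(x_0)$ (using that every value above the infimum is realized), take a witness ball $B(x_0,\delta)$ with $\Phi(B(x_0,\delta)) \sqsubset B(y_0,\eps)$, and transfer localization to nearby points by carrier monotonicity — the paper uses a point-dependent inner radius $\delta_1$ for each $x_1 \in B(x_0,\delta)$ where you use the uniform radius $\delta/2$ on the half-ball, an immaterial difference. Your explicit remark that the infimum defining $E$ need not be attained is a detail the paper leaves implicit.
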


\begin{proof}
Let $x \in U$ with $E(x) = \eta$. Let $\eps > \eta$. Then there exists some $y \in Y$ and some $\delta>0$ such that $\Phi (B(x;\delta)) \sqsubset B(y;\eps)$. Choose some arbitrary $x_1 \in B(x;\delta)$ different from $x$. Then there exists some $\delta_1$ with $B(x_1;\delta_1) \subset B(x;\delta)$ so that $\Phi (B(x_1;\delta_1)) \sqsubset B(y;\eps)$. Therefore, $\Phi$ is $\eps$-localized at $(x_1,y)$, yielding that $E(x_1) \le \eps$ and that $x_1 \in U$. Since $x_1 \in B(x;\delta)$ was arbitrary, this means that $\sup E(B(x;\delta)) \le \eps$ and that $B(x;\delta) \subseteq U$. Since $\eps$ was arbitrary, this means that $\displaystyle \limsup_{\delta \to 0} E(B(x;\delta)) \le \eta = E(x)$, meaning that $E$ is upper-semi-continuous. Since $x$ was arbitrary, this means that $U$ is open.
\end{proof}

\section{$0$-Localized Transfunctions}

When $f: X \to Y$ is continuous, we know that $f_\#$ is a well-behaved transfunction that is weakly $\sigma$-additive and norm-preserving. We also know that $f_\#$ is $0$-localized on $X$ by Example \ref{ex_localized_continuous}. It turns out that these three properties will characterize continuous functions in our setting, leading to Theorem \ref{thm_0-localized_continuous_char}. First, we will need the following proposition to find the continuous function:

\begin{proposition}\label{prop_0-localized_continuous_func}
Let $X$ be a metric space with an ample family of measures $\mc{M}_X$ and let $Y$ be a complete metric space. For any $A \in \Sigma_X$ and for any non-vanishing transfunction $\Phi:X \rsq Y$ which is $0$-localized on $A$ there is a unique continuous function $f: A \to Y$ such that $\Phi$ is $0$-close to $f$ on $A$.
\end{proposition}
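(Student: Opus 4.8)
The plan is to build $f$ pointwise, and the single fact driving everything is the following \emph{overlap observation}: if $\Phi$ is $\eps_1$-localized at $(x,y_1)$ and $\eps_2$-localized at $(x,y_2)$, then $d(y_1,y_2)<\eps_1+\eps_2$. To see this, pick witnessing radii $\delta_1,\delta_2$ with $\Phi(B(x,\delta_1))\sqsubset B(y_1,\eps_1)$ and $\Phi(B(x,\delta_2))\sqsubset B(y_2,\eps_2)$. Since $\Phi$ is non-vanishing, the nonempty open ball $B(x,\min\{\delta_1,\delta_2\})$ is not a vanishing set, so it carries some $\mu\in\mc{M}_X$ with $\Phi\mu\neq 0$. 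By Proposition \ref{prop_carrier}(i), $\mu$ is carried by both $B(x,\delta_1)$ and $B(x,\delta_2)$, hence $\Phi\mu$ is carried by both $B(y_1,\eps_1)$ and $B(y_2,\eps_2)$; since $|\Phi\mu|(Y)>0$, Proposition \ref{prop_carrier}(ii) forces $B(y_1,\eps_1)\cap B(y_2,\eps_2)\neq\varnothing$, and the triangle inequality gives the bound.

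Now fix $x\in A$. Using $0$-localization at $x$ with $\eps=1/n$, choose $\delta_n>0$ and $y_n\in Y$ with $\Phi(B(x,\delta_n))\sqsubset B(y_n,1/n)$. The overlap observation applied to $(x,y_m)$ and $(x,y_n)$ gives $d(y_m,y_n)<1/m+1/n$, so $(y_n)$ is Cauchy; since $Y$ is complete, set $f(x):=\lim_n y_n$. Given $\eps>0$, pick $n$ with $1/n+d(y_n,f(x))<\eps$; then $B(y_n,1/n)\subseteq B(f(x),\eps)$, so $\Phi(B(x,\delta_n))\sqsubset B(f(x),\eps)$ by Propositions \ref{prop_carrier}(i) and \ref{prop_spatial_relationship}(i). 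Thus $\Phi$ is $\eps$-localized at $(x,f(x))$ for every $\eps>0$, i.e., $\Phi$ is $0$-close to $f$ on $A$. Uniqueness is immediate: if $\Phi$ is also $0$-close to $g$ on $A$, then for every $\eta>0$ the overlap observation yields $d(f(x),g(x))<2\eta$, so $f=g$.

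It remains to prove continuity of $f$ at an arbitrary $x\in A$. Given $\eps>0$, use $0$-localization at $x$ to get $\delta>0$ and $y\in Y$ with $\Phi(B(x,\delta))\sqsubset B(y,\eps/4)$. For any $x'\in A$ with $d(x,x')<\delta$, the ball $B(x',\delta-d(x,x'))$ lies in $B(x,\delta)$, so $\Phi(B(x',\delta-d(x,x')))\sqsubset B(y,\eps/4)$ by Proposition \ref{prop_spatial_relationship}(i); that is, $\Phi$ is $(\eps/4)$-localized at $(x',y)$. Since $\Phi$ is $0$-close to $f$, it is also $(\eps/4)$-localized at $(x',f(x'))$, and the overlap observation gives $d(y,f(x'))<\eps/2$; the choice $x'=x$ likewise gives $d(y,f(x))<\eps/2$. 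Hence $d(f(x),f(x'))<\eps$ whenever $x'\in A$ and $d(x,x')<\delta$, proving continuity.

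The one delicate ingredient is the overlap observation, and within it the essential use of non-vanishing: without it the small balls about $x$ could all be sent to the zero measure, the centers $y_n$ would be entirely unconstrained, and no function could be extracted. Everything after that is bookkeeping with Propositions \ref{prop_carrier} and \ref{prop_spatial_relationship} together with completeness of $Y$.
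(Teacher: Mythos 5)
Your proof is correct and follows essentially the same route as the paper: the approximate centers $y_n$ form a Cauchy sequence because non-vanishing (with ampleness) forbids disjoint target balls over a common small ball at $x$, completeness of $Y$ yields $f(x)$, and the same overlap reasoning gives uniqueness and the $\eps$--$\delta$ continuity argument. (One cosmetic point: your appeal to Proposition \ref{prop_spatial_relationship}(i) formally presupposes weak $\sigma$-additivity, which this proposition does not assume, but the monotonicity fact you need there already follows from Proposition \ref{prop_carrier}(i) alone.)
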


\begin{proof}
Since $\Phi$ is $0$-localized on $A$, it follows that $E(x) = 0$ for all $x\in A$. Then for any fixed $x \in A$, there are $\delta_n>0$ and $y_n\in Y$, $n\in\mb{N}$, such that $\Phi(B(x, \delta_n))\sqsubset B(y_n,\frac1{n})$ for every $n\in\mb{N}$.

First we show that $d(y_m, y_n) \leq \frac1{m} + \frac1{n}$ for all $m,n\in\mb{N}$. Suppose
$d(y_m, y_n) > \frac1{m} + \frac1{n}$ for some $m,n\in\mb{N}$. Then $B(y_m; \frac1{m}) \cap B(y_n; \frac1{n}) = \varnothing$. Since $\mc{M}_X$ is ample, there is a non-zero measure $\nu\sqsubset B(x;\delta_m) \cap B(x;\delta_n)$. But then $\Phi(\nu)\sqsubset B(y_m; \frac1{m}) \cap B(y_n; \frac1{n})= \varnothing$, which is impossible since $\Phi$ is non-vanishing. 

Since $d(y_m, y_n) \le \frac1{m} + \frac1{n}$ for all $m,n\in\mb{N}$,  $(y_n)$ is a Cauchy sequence in the complete metric space $Y$. So there exists $y \in Y$ with $y_n \to y$. Furthermore, $B(y_n ,\frac1{n}) \subseteq B(y , \frac1{n}+d(y_n,y))$ for $n\in\mb{N}$. Indeed, $y$ is the unique point in $Y$ with this property and $\Phi$ is $0$-localized at $(x,y)$. Now we define $f: A \rightarrow Y$ by $f(x) = y$, where $y\in Y$ is the unique point such that $\Phi$ is $0$-localized at $(x,y)$. Clearly $\Phi$ is $0$-localized on $A$ via $f$. 

We now show that $f$ is continuous on A. Let $x_n \to x_0$ in $A$. Define $y_0 = f(x_0)$ and $y_n = f(x_n)$ for $n\in\mb{N}$. Let $\eps > 0$. Then there is a $\delta>0$ such that 
$\Phi(B(x_0,\delta)) \sqsubset B(y_0,\eps)$. Let $N\in\mb{N}$ be such that $d(x_m,x_0) < \delta/2$ for $m\ge N$. For every $m \ge N$ there is a $\delta_m < \delta/2$ such that $\Phi(B(x_m, \delta_m)) \sqsubset B(y_m, \eps)$. Then $B(x_m,\delta_m) \subset B(x_0,\delta)$ implies that $\Phi(B(x_m,\delta_m)) \sqsubset B(y_0,\eps) \cap B(y_m,\eps)$. Consequently, $d(y_m,y_0) \le 2\eps$, since $\mc{M}_X$ is ample and $\Phi$ is non-vanishing.
\end{proof}

\begin{theorem}\label{thm_0-localized_continuous_char}
Let $X$ be a metric space with an ample family of measures $\mc{M}_X$, $Y$ a complete metric space, and $\Phi: X \rsq Y$ a non-vanishing transfunction. Then $\Phi=f_\#$ for some continuous function $f:X\to Y$ if and only if $\Phi$ is norm-preserving, weakly $\sigma$-additive, and $0$-localized on $X$.
\end{theorem}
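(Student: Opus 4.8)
The plan is to prove the two directions separately. The forward direction ($\Phi = f_\#$ for continuous $f$ implies the three properties) is essentially a compilation of facts already in the excerpt: Example \ref{ex_measurable_is_sigma_week} gives weak $\sigma$-additivity (indeed strong), Example \ref{ex_localized_continuous} gives that $E = \mf{0}$, i.e. $\Phi$ is $0$-localized on $X$, and norm-preservation is the elementary computation $\|f_\# \mu\| = f_\# \mu(Y) = \mu(f^{-1}(Y)) = \mu(X) = \|\mu\|$ for positive $\mu$ (with the obvious modification via $|f_\#\mu| \le f_\#|\mu|$ and the reverse inequality for the vector case). So the forward direction needs only a sentence or two.

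For the converse, suppose $\Phi$ is non-vanishing, norm-preserving, weakly $\sigma$-additive, and $0$-localized on $X$. I would first invoke Proposition \ref{prop_0-localized_continuous_func} with $A = X$ to obtain a unique continuous $f: X \to Y$ such that $\Phi$ is $0$-close to $f$ on $X$; that is, for every $x$ and every $\eps > 0$ there is $\delta > 0$ with $\Phi(B(x;\delta)) \sqsubset B(f(x);\eps)$. The remaining task is to show $\Phi = f_\#$, i.e. $\Phi\mu = f_\#\mu$ for every $\mu \in \mc{M}_X$. The strategy is a localization-and-reassembly argument: fix $\mu$, cover $X$ by small balls, decompose $\mu$ into an orthogonal sum of projections supported in those balls, control each piece using $0$-closeness to $f$ together with continuity of $f$ (so that $f$ maps a small ball into a small ball), and recombine using weak $\sigma$-additivity of $\Phi$ (and strong $\sigma$-additivity of $f_\#$).

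Concretely, fix $\eps > 0$. For each $x \in X$ choose $\delta_x > 0$ small enough that both $\Phi(B(x;\delta_x)) \sqsubset B(f(x);\eps)$ and (by continuity of $f$) $f(B(x;\delta_x)) \subseteq B(f(x);\eps)$, so that $f_\#(B(x;\delta_x)) \sqsubset B(f(x);\eps)$ as well by Proposition \ref{prop_commute_carrier_with_inverse_function}. By second-countability extract a countable subcover $\{B(x_k;\delta_{x_k})\}_{k=1}^\infty$, and pass to the disjointification $T_k = B(x_k;\delta_{x_k}) \setminus \bigcup_{n<k} B(x_n;\delta_{x_n})$ exactly as in the proof of Proposition \ref{prop_spatial_relationship}(ii). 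Then $\mu = \sum_{k=1}^\infty \pi_{T_k}\mu$ is a bounded orthogonal sum, so
\begin{align*}
\Phi\mu = \sum_{k=1}^\infty \Phi(\pi_{T_k}\mu), \qquad f_\#\mu = \sum_{k=1}^\infty f_\#(\pi_{T_k}\mu),
\end{align*}
and for each $k$ both $\Phi(\pi_{T_k}\mu)$ and $f_\#(\pi_{T_k}\mu)$ are carried by $B(f(x_k);\eps)$. Moreover norm-preservation gives $\|\Phi(\pi_{T_k}\mu)\| = \|\pi_{T_k}\mu\| = \|f_\#(\pi_{T_k}\mu)\|$ for every $k$. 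The idea is then to compare $\Phi\mu$ and $f_\#\mu$ as measures: each is a bounded sum of equal-mass pieces living in the balls $B(f(x_k);\eps)$, so on any Borel set $B \subseteq Y$ the difference $|\Phi\mu(B) - f_\#\mu(B)|$ is controlled, after grouping the indices $k$ according to whether $B(f(x_k);\eps)$ meets $B$, $B^c$, or the $\eps$-collar of $\partial B$, by a bound of the form $\sum_{k \in (\text{boundary indices})} \|\pi_{T_k}\mu\|$, which one shows tends to $0$ as $\eps \to 0$ — for instance by testing against balls $B(y;r)$ and using that $f_\#\mu$ assigns zero mass to the sphere of radius $r$ for all but countably many $r$, together with the fact that the $T_k$-pieces whose $f(x_k)$ lie within $\eps$ of that sphere have total mass vanishing as $\eps \to 0$ because $f_\#\mu$ is a finite measure and $f(x_k) \to$ values carrying those masses. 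Letting $\eps \to 0$ forces $\Phi\mu(B(y;r)) = f_\#\mu(B(y;r))$ for a generating family of balls, hence $\Phi\mu = f_\#\mu$.

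The main obstacle is exactly this last comparison step: weak $\sigma$-additivity tells us how $\Phi$ distributes over the orthogonal decomposition, and $0$-closeness pins down where each piece $\Phi(\pi_{T_k}\mu)$ lives and (via norm-preservation) how much mass it has, but it does not directly say how that mass is distributed \emph{within} the ball $B(f(x_k);\eps)$; one must leverage the freedom to shrink $\eps$ (re-covering $X$ each time) and a Portmanteau-type argument — choosing radii $r$ off the at-most-countable set of $f_\#\mu$-atomic spheres — to squeeze $\Phi\mu$ and $f_\#\mu$ together. Making the ``boundary mass vanishes'' estimate rigorous (uniformly in the choice of cover) is the delicate point, and I would handle it by fixing $\mu$ first, noting $f_\#\mu$ is a fixed finite Borel measure on $Y$, and using its regularity: for fixed $r$ and $y$, the $f_\#\mu$-mass of $\{z : |d(z,y) - r| < \eps\}$ decreases to the mass of the sphere (zero for all but countably many $r$) as $\eps \to 0$, and the contribution of the ``straddling'' pieces $\pi_{T_k}\mu$ is bounded by this quantity since $f$ pushes their mass into that collar.
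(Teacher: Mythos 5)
Your overall strategy (get $f$ from Proposition \ref{prop_0-localized_continuous_func}, decompose $\mu$ into an orthogonal sum over a disjointified cover by small balls, use norm-preservation to equate the masses of $\Phi(\pi_{T_k}\mu)$ and $f_\#(\pi_{T_k}\mu)$, and control the error by the pieces straddling the boundary of a test set) is a genuinely different route from the paper's, and most of it can be made rigorous: the straddling mass is indeed bounded by $f_\#\mu$ of a $2\eps$-collar, which shrinks to the $f_\#\mu$-measure of the relevant sphere. The genuine gap is your final identification step: from $\Phi\mu(B(y;r))=f_\#\mu(B(y;r))$ for all centers $y$ and all radii $r$ outside a countable exceptional set you conclude ``hence $\Phi\mu=f_\#\mu$.'' Open balls are not closed under finite intersections, so they are not a $\pi$-system and the $\pi$-$\lambda$ uniqueness theorem does not apply to them; worse, it is known (Davies' example) that on a general separable metric space two distinct finite Borel measures can agree on every ball. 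Since the theorem allows an arbitrary complete metric $Y$ (not just $\mb{R}^m$, where your step is fine), this step needs an additional argument. A repair inside your framework: run the same inside/outside/straddling estimate against an arbitrary open $V\subseteq Y$, which gives $|\Phi\mu(V)-f_\#\mu(V)|\le f_\#\mu(\partial V)$; then for arbitrary open $V$ approximate from inside by $V_t:=\{y: d(y,V^c)>t\}$, whose boundaries lie in the pairwise disjoint level sets $\{d(\cdot,V^c)=t\}$ and hence are $f_\#\mu$-null for all but countably many $t$, so $\Phi\mu(V)=\lim_t f_\#\mu(V_t)=f_\#\mu(V)$. Agreement on all open sets (a $\pi$-system) plus finiteness then yields $\Phi\mu=f_\#\mu$.

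For comparison, the paper avoids the limiting/Portmanteau machinery entirely: for a fixed open $V$ it splits $X$ into $U=f^{-1}(V)$, $W=f^{-1}((\overline V)^c)$ and $Z=f^{-1}(\partial V)$, proves $\Phi(U)\sqsubset V$, $\Phi(W)\sqsubset(\overline V)^c$ and, via nested collars $L_n\supseteq\partial V$ and Proposition \ref{prop_spatial_relationship} (iii), $\Phi(Z)\sqsubset\partial V$; then norm-preservation gives exactly $\Phi(\pi_U\mu)(V)=\mu(U)=f_\#\mu(V)$ while the $W$- and $Z$-pieces contribute nothing to $V$, so $\Phi\mu(V)=f_\#\mu(V)$ for every open $V$ with no $\eps\to 0$ argument, and the $\pi$-$\lambda$ theorem finishes. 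The paper's decomposition is adapted to the target set $V$ rather than to a cover of $X$, which is what lets it sidestep both the boundary-mass estimate and the ball-determinacy issue; your approach, once patched as above, works too but is heavier.
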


\begin{proof}
We only need to show that if $\Phi$ is norm-preserving, weakly $\sigma$-additive, and $0$-localized on $X$, then $\Phi=f_\#$ for some continuous function $f:X\to Y$. Let $f: X \to Y$ be the unique continuous function guaranteed by Proposition \ref{prop_0-localized_continuous_func}. We will show that $\Phi=f_\#$. 

Let $V\subseteq Y$ be an open set.  Define $U = f^{-1} (V)$. By Proposition \ref{prop_0-localized_continuous_func}, there exists an open ball cover $\{B(x; \delta_x) : x \in U\}$ of $U$ such that $\Phi(B(x; \delta_x)) \sqsubset V$ for all $x \in X$. By Corollary \ref{cor_spatial_relationship}, we have that $\Phi(U) \sqsubset V$. 

Similarly, if we define $W= f^{-1} \left(\left(\overline{V}\right)^c\right)$, which is also open, we have that $\Phi(W) \sqsubset \left(\overline{V}\right)^c$. 

Next, we define $Z := f^{-1}(\partial V) = f^{-1} (\overline{V} \cap V^c)$, which is closed, and for each $n \in \mb{N}$, let $L_n = \cup_{y\in \partial V} B(y; 1/n)$, which is an open set. Since $\Phi$ is $0$-localized, for each $x \in Z$ and $n\in\mb{N}$, there exists $\delta_{x,n}>0$ such that $\Phi(B(x; \delta_{x,n})) \sqsubset L_n$. For $K_n = \cup_{x\in Z} B(x; \delta_{x,n})$ we have $\Phi(K_n) \sqsubset L_n$ for all $n\in\mb{N}$, by Corollary \ref{cor_spatial_relationship}. Noting that $\cap_{n=1}^\infty K_n =Z$ and $\cap_{n=1}^\infty L_n = \partial V$, it is clear by Proposition \ref{prop_spatial_relationship} (iii) that $\Phi(Z) \sqsubset \partial V$. 

Let $\mu \in \mc{M}_X$. Since $\pi_U \mu \sqsubset U$ and $\Phi(U) \sqsubset V$, we have that $\Phi(\pi_U \mu) \sqsubset V$ and we have via norm-preservation of $\Phi$ that
$$
\Phi(\pi_U \mu) (V) = ||\Phi(\pi_U \mu)|| = ||\pi_U \mu|| = \mu(U) = f_\# (\mu) (V).
$$

Since $\Phi(W)\sqsubset \left(\overline{V}\right)^c$ and $V \cap \left(\overline{V}\right)^c = \varnothing$, it follows that $\Phi(\pi_W \mu) (V) = 0.$ Again, since $\Phi(Z)\sqsubset \partial V$ and $V \cap \partial V = \varnothing$, it follows that $\Phi(\pi_Z \mu) (V) = 0$.

From the above we obtain 
$$
\Phi(\mu)(V) = \Phi(\pi_U \mu)(V) + \Phi(\pi_W \mu)(V) + \Phi(\pi_Z \mu)(V) = f_\# (\mu) (V).
$$ 
Moreover, since $\Phi(\mu)$ and $f_\#(\mu)$ are finite measures which agree on open sets, they must agree on all sets in $\Sigma_Y$ by the $\pi-\lambda$ Theorem.  Finally, since $\mu\in\mc{M}_X$ is arbitrary, we have $\Phi = f_\#$.
\end{proof}

\begin{theorem}
If norm-preservation of $\Phi$ is omitted from the hypothesis of Theorem \ref{thm_0-localized_continuous_char} and if $f$ is the continuous function found in Proposition \ref{prop_0-localized_continuous_func}, then for every $\mu \in \mc{M}_X$, it follows that $\Phi \mu \ll f_\# \mu$, that is, $\Phi \mu$ is absolutely continuous with respect to $f_\# \mu$ for all $\mu \in \mc{M}_X$.
\end{theorem}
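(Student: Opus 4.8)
The plan is to re-run the argument of Theorem~\ref{thm_0-localized_continuous_char} with $f$ the continuous function produced by Proposition~\ref{prop_0-localized_continuous_func}, replacing the use of norm-preservation by the observation that even without it $\Phi$ cannot move mass off the $f$-preimage of a closed set. So the first step is to record a ``closed-set'' analogue of the spatial facts used inside the proof of Theorem~\ref{thm_0-localized_continuous_char}: \textit{for every closed $C\subseteq Y$ and every $\nu\in\mc{M}_X$ with $\nu\sqsubset f^{-1}(C)$ one has $\Phi\nu\sqsubset C$.} This is essentially the verbatim argument showing ``$\Phi(Z)\sqsubset\partial V$'' there: write $C=\bigcap_{n\ge1}C_n$ with $C_n=\{y:d(y,C)<1/n\}$ open; since $\Phi$ is $0$-close to $f$, for each $x\in f^{-1}(C)$ and each $n$ pick $\delta_{x,n}>0$ with $\Phi(B(x;\delta_{x,n}))\sqsubset B(f(x);1/n)\subseteq C_n$ (using $f(x)\in C$); set $K_n=\bigcup_{x\in f^{-1}(C)}B(x;\delta_{x,n})$ and apply Corollary~\ref{cor_spatial_relationship} to get $\Phi(K_n)\sqsubset C_n$; then Proposition~\ref{prop_spatial_relationship}(iii) gives $\Phi(\bigcap_n K_n)\sqsubset\bigcap_n C_n=C$, and since $f^{-1}(C)\subseteq\bigcap_n K_n$, Proposition~\ref{prop_spatial_relationship}(i) yields $\Phi(f^{-1}(C))\sqsubset C$, hence $\Phi\nu\sqsubset C$.

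Next I would fix $\mu\in\mc{M}_X$ and a Borel set $B\subseteq Y$ with $f_\#\mu(B)=\mu(f^{-1}(B))=0$, the goal being $\Phi\mu(B)=0$. Since $f_\#\mu$ is a finite Borel measure on a metric space it is outer regular, so there are open $V_k\supseteq B$ with $f_\#\mu(V_k)<1/k$; put $B^{*}=\bigcap_k V_k$, a $G_\delta$ set with $B\subseteq B^{*}$ and $f_\#\mu(B^{*})=0$. Then $(B^{*})^{c}=\bigcup_k V_k^{c}$ is $F_\sigma$; with $C_0=\varnothing$ and $C_m=\bigcup_{k\le m}V_k^{c}$ the sets $C_m$ are closed, increasing, and $\bigcup_m C_m=(B^{*})^{c}$. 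The sets $f^{-1}(C_m\setminus C_{m-1})$ are pairwise disjoint with union $f^{-1}((B^{*})^{c})$, which carries $\mu$ because $\mu(f^{-1}(B^{*}))=0$; hence $\mu=\sum_{m\ge1}\pi_{f^{-1}(C_m\setminus C_{m-1})}\mu$ is a bounded orthogonal sum (its norm sum equals $\mu(f^{-1}((B^{*})^{c}))\le ||\mu||$), and weak $\sigma$-additivity of $\Phi$ gives $\Phi\mu=\sum_{m\ge1}\Phi\big(\pi_{f^{-1}(C_m\setminus C_{m-1})}\mu\big)$.

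To finish, each summand is carried by $f^{-1}(C_m)$ with $C_m$ closed, so by the closed-set fact of the first step $\Phi\big(\pi_{f^{-1}(C_m\setminus C_{m-1})}\mu\big)\sqsubset C_m\subseteq(B^{*})^{c}\subseteq B^{c}$; Proposition~\ref{prop_carrier}(iv) then gives $\Phi\mu\sqsubset B^{c}$, i.e.\ $\Phi\mu(B)=0$. Letting $B$ range over all $f_\#\mu$-null Borel sets and then $\mu$ over $\mc{M}_X$ gives $\Phi\mu\ll f_\#\mu$ for every $\mu\in\mc{M}_X$. If $\mc{M}_X$ consists of vector measures, the same computation applied to $|\mu|$ (with $|f_\#\mu|\le f_\#|\mu|$ from Proposition~\ref{prop_commute_carrier_with_inverse_function} together with Proposition~\ref{prop_carrier}(iii)) yields $|\Phi\mu|\ll|f_\#\mu|$.

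I expect the only genuine obstacle to be the closed-set spatial lemma, and even that is a near-repetition of a step already carried out in the proof of Theorem~\ref{thm_0-localized_continuous_char}; the sole external input is the outer regularity of finite Borel measures on a metric space, which is classical and should simply be cited. Everything else is routine bookkeeping with Propositions~\ref{prop_carrier} and \ref{prop_spatial_relationship} and the definition of weak $\sigma$-additivity.
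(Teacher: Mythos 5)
Your argument is correct, but it is not the route the paper takes. The paper's proof establishes the uniform spatial statement $\Phi(f^{-1}(B))\sqsubset B$ for \emph{every} Borel $B\subseteq Y$: it defines $\mc{H}:=\{B\in\Sigma_Y:\Phi(f^{-1}(B))\sqsubset B\}$, shows (as in the proof of Theorem \ref{thm_0-localized_continuous_char}) that $\mc{H}$ contains all open sets, shows via Proposition \ref{prop_spatial_relationship}(iii) that $\mc{H}$ is closed under countable unions and intersections, and then runs a transfinite induction over the Borel hierarchy of the metric space $Y$ to conclude $\mc{H}=\Sigma_Y$; absolute continuity then follows via Proposition \ref{prop_commute_carrier_with_inverse_function}. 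You instead prove only the closed-set case $\Phi(f^{-1}(C))\sqsubset C$ (essentially the paper's ``$\Phi(Z)\sqsubset\partial V$'' step, done carefully with containment rather than equality of the $K_n$-intersection, which is a small improvement), and then handle an arbitrary $f_\#\mu$-null Borel set $B$ by outer regularity of the finite Borel measure $f_\#\mu$: shrink to a null $G_\delta$ superset $B^*$, write $f^{-1}((B^*)^c)$ as a countable disjoint union of preimages of closed sets, decompose $\mu$ as the corresponding bounded orthogonal sum (using ampleness and closure under projections), and apply weak $\sigma$-additivity with Proposition \ref{prop_carrier}(iv). What your approach buys is the avoidance of transfinite induction, at the cost of invoking the classical regularity of finite Borel measures on metric spaces and of proving only the $\mu$-wise statement (which is all the theorem asserts) rather than the uniform containment $\Phi(f^{-1}(B))\sqsubset B$ that the paper's argument yields and which is of some independent interest. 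One peripheral remark: your closing sentence about vector measures is both unnecessary (the measures here are finite positive by the paper's standing convention) and not justified as stated, since $|\Phi\mu|$ is not related to $\Phi|\mu|$ without further hypotheses and $|\mu|$ need not lie in $\mc{M}_X$; I would simply delete it.
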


\begin{proof}
Let $\mu \in \mc{M}_X$. To show that $\Phi \mu \ll f_\# \mu$, it is equivalent to show that for all $B \in \Sigma_Y$, $f_\# \mu \sqsubset B$ implies that $\Phi \mu \sqsubset B$, or that $\Phi( f^{-1}(B)) \sqsubset B$ by Proposition \ref{prop_commute_carrier_with_inverse_function}. 

Let $\mc{H} := \{B \in \Sigma_Y : \Phi(f^{-1}(B)) \sqsubset B\}$. 

Let $V$ and $U := f^{-1}(V)$ be open sets. If $\mu \sqsubset U$, then $\pi_U \mu = \mu$ (Corollary \ref{cor_projection}) and $\Phi \mu \sqsubset V$ ( proof of Theorem \ref{thm_0-localized_continuous_char}). This means that $\Phi (f^{-1}(V)) \sqsubset V$ and that $\mc{H}$ contains $V$. Therefore $\mc{H}$ contains all open sets in $Y$. Since $Y$ is a metric space, it follows that all Borel subsets of $Y$ can be constructed from the open subsets inductively by considering countable intersections and countable unions of previously constructed sets. In other words, if we define the family $\{\mc{G}_\alpha : \alpha \text{ is a countable ordinal}\}$ of collections of subsets in the following way, each Borel set belongs to one of these collections:
\begin{itemize}
\item[] $\mc{G}_0 := \tau_Y = \{V : V \text{ is open in Y} \}$ ;
\item[] for non-preceded ordinal $\alpha$ and even $n \in \mb{N}$,  $\mc{G}_{\alpha +n+1} := \{ \cap_{i=1}^\infty A_i : A_i \in \mc{G}_{\alpha +n} \}$ ;
\item[] for non-preceded ordinal $\alpha$ and odd $n \in \mb{N}$,  $\mc{G}_{\alpha +n+1} := \{ \cup_{i=1}^\infty A_i : A_i \in \mc{G}_{\alpha +n} \}$ ; 
\item[] for limit ordinal $\alpha$, $\mc{G}_{\alpha} := \{ \cup_{i=1}^\infty A_i : A_i \in \cup_{\beta < \alpha} \mc{G}_\beta \}$.
\end{itemize}

To complete the proof via transfinite induction, we need to show that the collection $\mc{H}$ is closed under countable intersections and countable unions. Let $(B_i)_{i=1}^\infty$ be a sequence in $\mc{H}$. For all natural $i$, $\Phi(f^{-1} (B_i)) \sqsubset B_i$. Therefore, by Proposition \ref{prop_spatial_relationship} (iii), we have that $\Phi(f^{-1} (\cap_{i=1}^\infty B_i)) \sqsubset \cap_{i=1}^\infty B_i$ and $\Phi(f^{-1} (\cup_{i=1}^\infty B_i)) \sqsubset \cup_{i=1}^\infty B_i$. Hence,  $\cap_{i=1}^\infty B_i \in \mc{H}$  and $\cup_{i=1}^\infty B_i \in \mc{H}$.
\end{proof}

This allows every $\mu$ to correspond to some nonnegative $f_\# \mu$-integrable $g_\mu : Y \to [0,\infty]$ such that $\displaystyle \Phi \mu (B) = \int_B g_\mu d(f_\# \mu) = \int_{f^{-1} (B)} g_\mu \circ f d\mu$ for all $\mu$. If $g_\mu$ does not depend on $\mu$, then $\Phi$ is strongly additive.

We can also characterize transfunctions which are identically measurable functions, but under stricter settings. First, we discuss restrictions of transfunctions.

\begin{definition}\label{def_transfunction_restriction}
Let $\Phi : X \rsq Y$ be a transfunction, and let $A\subseteq X$ be measurable. Then  $\Phi \circ \pi_A$, or $\mu \mapsto \Phi(\pi_A \mu)$ is called the \textbf{restriction of $\Phi$ to $A$}. 
\end{definition} 


Note that $\Phi\circ\pi_A= \Phi$ when $\supp\Phi \subseteq A$ and that $\Phi\circ\pi_B = 0$ when $B \subseteq \nulls\Phi$.

\begin{theorem}\label{thm_transfunction_measurable_char}
Let $X$ be locally compact, let $\lambda$ be a finite regular measure on $X$, and let $\mc{M}_X$ only have measures absolutely continuous with respect to $\lambda$. Let $\Phi: X \rsq Y$ be a weakly $\sigma$-additive transfunction. Then $\Phi$ is a measurable function if and only if there exists a sequence of compact sets $\{F_n\}_{n=1}^\infty$ such that $\lambda(F_n^c) < \displaystyle \frac{1}{n}$ and that $\Phi \circ \pi_{F_n}$ is identified with some continuous function on $F_n$.
\end{theorem}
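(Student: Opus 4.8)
The plan is to prove both directions. For the ($\Leftarrow$) direction, suppose such a sequence $\{F_n\}$ of compact sets exists with $\Phi \circ \pi_{F_n}$ identified with a continuous function $f_n : F_n \to Y$. First I would observe that the $f_n$ are compatible on overlaps: if $x \in F_m \cap F_n$, then by an ampleness-and-non-vanishing argument on small balls around $x$ (mirroring the compatibility argument inside the proof of Proposition \ref{prop_0-localized_continuous_func}), we get $f_m(x) = f_n(x)$, since a nonzero measure carried by $B(x;\delta) \cap F_m \cap F_n$ must be sent by $\Phi$ into $B(f_m(x);\eps) \cap B(f_n(x);\eps)$ for every $\eps$. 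Hence the $f_n$ glue to a single function $f$ defined on $F := \bigcup_n F_n$. Since $\lambda(F_n^c) < 1/n$, we have $\lambda(F^c) = 0$, so $F^c$ is $\lambda$-null and carries no nonzero measure in $\mc{M}_X$ (every measure in $\mc{M}_X$ is $\ll \lambda$); thus $\Phi = \Phi \circ \pi_F$. The function $f$ is measurable on $F$ because it is continuous on each compact piece $F_n$ and $F = \bigcup_n F_n$ is a countable union of measurable sets (extend $f$ arbitrarily, e.g. constant, on the null set $F^c$). Then I would show $\Phi = f_\#$ as transfunctions: for $\mu \in \mc{M}_X$ decompose $\mu = \sum_n \pi_{T_n}\mu$ where $T_1 = F_1$ and $T_n = F_n \setminus \bigcup_{k<n} F_k$ (a bounded orthogonal sum), use weak $\sigma$-additivity to write $\Phi\mu = \sum_n \Phi(\pi_{T_n}\mu)$, and on each $T_n \subseteq F_n$ the restriction $\Phi\circ\pi_{F_n} = (f_n)_\#$ gives $\Phi(\pi_{T_n}\mu) = (f_n)_\#(\pi_{T_n}\mu) = f_\#(\pi_{T_n}\mu)$; summing and using that $f_\#$ is weakly $\sigma$-additive (Example \ref{ex_measurable_is_sigma_week}) yields $\Phi\mu = f_\#\mu$. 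So $\Phi$ is identified with the measurable function $f$.

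For the ($\Rightarrow$) direction, suppose $\Phi = f_\#$ for a measurable $f : X \to Y$. Here I would invoke Lusin's theorem, which applies since $\lambda$ is a finite regular measure on a locally compact space $X$ and $Y$ is second-countable (hence $f$ is measurable into a nice target): for each $n$ there is a compact set $F_n \subseteq X$ with $\lambda(X \setminus F_n) < 1/n$ and $f|_{F_n}$ continuous. It remains to check that $\Phi \circ \pi_{F_n}$ is identified with the continuous function $f|_{F_n}$ on $F_n$ in the sense required: for $\mu \in \mc{M}_X$, $\pi_{F_n}\mu$ is carried by $F_n$, and $f_\#(\pi_{F_n}\mu)(B) = (\pi_{F_n}\mu)(f^{-1}(B)) = (\pi_{F_n}\mu)(f^{-1}(B) \cap F_n) = (\pi_{F_n}\mu)((f|_{F_n})^{-1}(B))$, so $\Phi \circ \pi_{F_n} = (f|_{F_n})_\#$, which is exactly the push-forward of the continuous map $f|_{F_n}$.

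The main obstacle is the precise statement and applicability of Lusin's theorem in the ($\Rightarrow$) direction — one needs $Y$ second-countable metric (or at least to have a countable base) so that continuity of $f$ restricted to a large compact set can be arranged; this is available under the standing assumptions ($Y$ second-countable metric) but should be cited carefully. A secondary subtlety is the overlap-compatibility of the pieces $f_n$ in the ($\Leftarrow$) direction: one must be careful that "identified with a continuous function on $F_n$" is interpreted as $\Phi \circ \pi_{F_n} = (f_n)_\#$ for $f_n : F_n \to Y$ continuous, and then the ampleness argument guaranteeing agreement on overlaps is the same local argument used in Proposition \ref{prop_0-localized_continuous_func}; gluing continuous functions that agree on the compact pieces of a countable closed cover gives a Borel-measurable function, which is all that is needed.
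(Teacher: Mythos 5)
Your forward direction matches the paper (Lusin's theorem, checking $\Phi\circ\pi_{F_n}=(f|_{F_n})_\#$), but the reverse direction has a genuine gap at the overlap-compatibility step. You claim $f_m(x)=f_n(x)$ for \emph{every} $x\in F_m\cap F_n$ by the ampleness/non-vanishing argument of Proposition \ref{prop_0-localized_continuous_func}, but that argument needs a \emph{nonzero} measure of $\mc{M}_X$ carried by $B(x;\delta)\cap F_m\cap F_n$. In Proposition \ref{prop_0-localized_continuous_func} the relevant set is an intersection of open balls, so ampleness applies; here the set is not open, and since every measure in $\mc{M}_X$ is absolutely continuous with respect to $\lambda$, no such nonzero measure exists when $\lambda(B(x;\delta)\cap F_m\cap F_n)=0$. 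Indeed the claim is false in general: at a point $x$ where $F_m\cap F_n$ is locally $\lambda$-null (e.g.\ an isolated point of $F_n$), $f_n(x)$ can be changed arbitrarily without affecting $(f_n)_\#$ on measures $\ll\lambda$ or the continuity of $f_n$ on $F_n$, so the identification $\Phi\circ\pi_{F_n}=(f_n)_\#$ does not pin down $f_n$ there, and your glued $f$ is not well defined. The paper avoids exactly this by invoking Cohn's Lemma 7.5.2 to extract a compact $G_{i,j}\subseteq F_i\cap F_j$ with $\lambda(G_{i,j})=\lambda(F_i\cap F_j)$ on which every nonempty relatively open set has positive $\lambda$-measure; the contradiction argument then yields $f_i=f_j$ only on $G_{i,j}$, and the possible disagreement set is dumped into a $\lambda$-null Borel set $N$ on which the glued function is defined arbitrarily.

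The good news is that your own final computation makes the everywhere-agreement claim unnecessary: if you simply define $f$ on $T_n=F_n\setminus\bigcup_{k<n}F_k$ to be $f_n|_{T_n}$ (priority by least index) and constant on the null set $(\bigcup_n F_n)^c$, then $f$ is Borel measurable, and your decomposition $\Phi\mu=\sum_n\Phi(\pi_{T_n}\mu)=\sum_n (f_n)_\#(\pi_{T_n}\mu)=\sum_n f_\#(\pi_{T_n}\mu)=f_\#\mu$ goes through verbatim (using $\pi_{T_n}\mu\sqsubset T_n$ and $f=f_n$ on $T_n$, plus $\mu\ll\lambda$ to kill the null complement). So either adopt the paper's route through Cohn's lemma, or drop the gluing claim and phrase the construction piecewise; as written, the step asserting agreement on all of $F_m\cap F_n$ is unjustified.
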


\begin{proof}
The forward direction is a straight-forward consequence of Lusin's Theorem, where the measurable and continuous functions are identified with the respective transfunctions. 

We now prove the reverse direction. For each natural $n$, let $\Phi \circ \pi_{F_n}$ be identified with continuous function $f_n : F_n \to Y$. Let $i \not= j$. If $\lambda(F_i \cap F_j) > 0$, then by Lemma 7.5.2 from Cohn \cite{Cohn}, there exists some compact subset $G_{i,j} \subseteq F_i \cap F_j$ such that $\lambda(G_{i,j}) = \lambda(F_i \cap F_j)$ and $\lambda(U \cap G_{i,j}) > 0$ whenever $U \cap G_{i,j} \not= \varnothing$ for open $U$. Otherwise, if $\lambda(F_i \cap F_j) = 0$, then define $G_{i,j}=\varnothing$.

For the latter case, $f_i = f_j$ is vacuously true on $G_{i,j}$. For the former case, let $x \in G_{i,j}$. Suppose that $f_i(x) \not= f_j(x)$. If we let $\eps < \displaystyle \frac{1}{2} d(f_i(x),f_j(x))$, this would imply by $0$-localization of $\Phi \circ \pi_{F_i}$ and $\Phi \circ \pi_{F_j}$ the existence of $\delta>0$ such that $\Phi(B(x;\delta) \cap G_{i,j}) \sqsubset B(f_i(x);\eps) \cap B(f_j(x);\eps) = \varnothing$. Choosing $\mu_0 := \pi_{\displaystyle B(x;\delta) \cap G_{i,j}} \lambda \not= 0$ (by the property of $G_{i,j}$ with respect to $\lambda$), we observe that $\Phi(\mu_0) = \Phi \circ \pi_{F_i}(\mu_0) = 0$, which contradicts the norm-preservation of $\Phi \circ \pi_{F_i}$ on $F_i$. It follows that $f_i = f_j$ on $G_{i,j} \subseteq F_i \cap F_j$. Having $i,j$ arbitrary, we have that outside the $\lambda$-null Borel set $N := (\cup_{i,j=1}^\infty \left( F_i \cap F_j - G_{i,j}\right)) \cup (\cap_{i=1}^\infty F_i^c)$, the functions $(f_i)_{i=1}^\infty$ coincide, allowing them to be glued to a measurable function $h : N^c \to Y$. Then extend $h$ to a measurable function $\hat{h} : X \to Y$ by $\hat{h}(x \in N) := y_0$ for some fixed $y_0 \in Y$. 

We now show that $\Phi = \hat{h}_\#$. Let $\mu \in \mc{M}_X$ and let $A_n := N^c \cap (F_n - \cup_{m<n} F_m)$. Since $\lambda(N)=0$ and $\mu \ll \lambda$, $\mu(N) = 0$. This means that 
\begin{align*}
\Phi(\mu)(B) &= \Phi\left( \pi_N \mu + \sum_{n=1}^\infty \pi_{A_n} \mu\right)(B) = 0 + \sum_{n=1}^\infty \Phi(\pi_{A_n} \mu)(B) = 0 + \sum_{n=1}^\infty {f_n}_\# (\pi_{A_n} \mu) (B) \\
&= 0 + \sum_{n=1}^\infty \mu(A_n \cap \hat{h}^{-1}(B)) =\mu(N \cap \hat{h}^{-1} (B)) + \mu(N^c \cap \hat{h}^{-1}(B)) = \hat{h}_\# (\mu) (B).
\end{align*}
\end{proof}

\section{$\eps$-Localized Transfunctions}

What about when $\Phi$ is not indentifiable with a measurable function? We consider this for uniformly localized transfunctions. Given that $\Phi: X \rsq Y$ is uniformly $\eps$-localized, can we find a measurable function $f:X \to Y$ such that $\Phi$ is uniformly $\eps$-close to $f$? If we can find such a function, then it gives a rough idea of how the transfunction behaves. In our settings, we can always find such a measurable function: in fact, it can be chosen so that $f$ is $\sigma$-simple. Can we choose a \emph{continuous} $f$ in this way? The answer is also affirmative, but it requires a more demanding setting.

\begin{theorem}\label{thm_uniformly_eps_close_measurable}
Let $X$ and $Y$ be metric spaces, with $X$ second-countable. Then every transfunction $\Phi$ which is uniformly $\eps$-localized on $X$ is uniformly $\eps$-close to some measurable function $f : X \to Y$. 
\end{theorem}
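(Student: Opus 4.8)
The plan is to build the measurable function $f$ by discretizing $X$ using its second-countability together with the uniform parameter $\delta$ coming from uniform $\eps$-localization. Since $\Phi$ is uniformly $(\delta,\eps)$-localized on $X$, for every $x \in X$ there is a point $y_x \in Y$ with $\Phi(B(x;\delta)) \sqsubset B(y_x;\eps)$. The family $\{B(x;\delta) : x \in X\}$ is an open cover of $X$, and since $X$ is second-countable (hence Lindel\"of), there is a countable subcover $\{B(x_n;\delta) : n \in \mb{N}\}$. From this countable cover I would carve out a countable measurable partition of $X$ by the standard disjointification: set $A_1 := B(x_1;\delta)$ and $A_n := B(x_n;\delta) \setminus \bigcup_{m<n} B(x_m;\delta)$ for $n \ge 2$. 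Each $A_n$ is Borel (so lies in $\Sigma_X$), the $A_n$ are pairwise disjoint, and $\biguplus_{n=1}^\infty A_n = X$.

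Next I define $f : X \to Y$ by $f(x) := y_{x_n}$ for the unique $n$ with $x \in A_n$. This $f$ is measurable: it is constant on each piece of a countable Borel partition, so $f^{-1}(B) = \bigcup_{n : y_{x_n} \in B} A_n$ is a countable union of Borel sets for any $B \in \Sigma_Y$; in particular $f$ is $\sigma$-simple. It remains to check that $\Phi$ is uniformly $\eps$-close to $f$, i.e. $\Phi$ is $\eps$-localized at $(x, f(x))$ for every $x \in X$. Fix $x \in X$ and let $n$ be the index with $x \in A_n$. Since $A_n \subseteq B(x_n;\delta)$, Proposition~\ref{prop_spatial_relationship}~(i) gives $\Phi(A_n) \sqsubset B(y_{x_n};\eps) = B(f(x);\eps)$. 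Because $x \in A_n$ and $A_n$ is open-ish only in the sense of containing... — more precisely, I do not claim $A_n$ is a ball, but I only need a ball around $x$ contained in $A_n$? That is the subtle point: $A_n$ need not be a neighborhood of $x$. To handle this I choose the partition more carefully so that membership is witnessed by a ball: for each $x$, $x \in B(x_n;\delta)$ for some $n$, so there is $\delta_x > 0$ with $B(x;\delta_x) \subseteq B(x_n;\delta)$, whence $\Phi(B(x;\delta_x)) \sqsubset B(x_n;\delta)$-image $\sqsubset B(f(x);\eps)$ by Proposition~\ref{prop_spatial_relationship}~(i), using $f(x) = y_{x_n}$ when $n$ is the \emph{first} index with $x \in B(x_n;\delta)$ — which is exactly the index defining $A_n \ni x$. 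This shows $\Phi$ is $(\delta_x,\eps)$-localized at $(x,f(x))$ for every $x$, so $\Phi$ is $\eps$-close to $f$ on $X$.

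The main obstacle is precisely the bookkeeping just flagged: ensuring that the value $f(x)$ assigned via the disjointified partition is the \emph{same} witness $y_{x_n}$ that works for a genuine ball neighborhood of $x$. This is resolved by insisting that $f(x) := y_{x_{n(x)}}$ where $n(x) := \min\{n : x \in B(x_n;\delta)\}$; then $x \in A_{n(x)}$ by construction of the disjointification, the definition of $f$ is consistent, and any ball $B(x;\delta_x) \subseteq B(x_{n(x)};\delta)$ witnesses $\eps$-localization at $(x,f(x))$ via Proposition~\ref{prop_spatial_relationship}~(i). No use of $\sigma$-additivity, ampleness, or non-vanishing is needed — only the uniform choice of $\delta$ and Lindel\"ofness of $X$ — which is why the hypotheses here are so light compared to Theorem~\ref{thm_0-localized_continuous_char}.
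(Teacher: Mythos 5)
There is a genuine gap, and it is exactly the uniformity claimed in the theorem. You cover $X$ by the full-radius balls $B(x_n;\delta)$ and then, for a point $x$ with $f(x)=y_{x_{n(x)}}$, you witness localization at $(x,f(x))$ by a ball $B(x;\delta_x)\subseteq B(x_{n(x)};\delta)$. That $\delta_x$ depends on $x$ and shrinks to $0$ as $x$ approaches the boundary of $B(x_{n(x)};\delta)$, so there is no single $\delta'>0$ that works for every $x$. What you have proved is that $\Phi$ is $\eps$-close to $f$ on $X$ (indeed your closing sentence says exactly that), i.e.\ the conclusion of Theorem~\ref{thm_eps_close_measurable}; but Theorem~\ref{thm_uniformly_eps_close_measurable} asserts that $\Phi$ is \emph{uniformly} $\eps$-close to $f$, meaning (in the sense of Definition~\ref{def_uniformly_localized}) that $\Phi$ is $(\delta',\eps)$-localized at $(x,f(x))$ for all $x$ with one fixed $\delta'$. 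The distinction matters downstream: the proof of Theorem~\ref{thm_uniformly_eps_close_continuous} reuses this $f$ together with the fixed radius $\delta/3$.

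The repair is small and is precisely the paper's device: extract the countable subcover from the shrunken balls $\{B(x;\delta/3): x\in X\}$ (radius $\delta/2$ would also do), obtaining $\{B(x_n;\delta/3)\}$, and disjointify those. Then whenever $f(x)=y_{x_n}$ one has $x\in B(x_n;\delta/3)$, hence $B(x;\delta/3)\subseteq B(x_n;\delta)$ by the triangle inequality, so $\Phi(B(x;\delta/3))\sqsubset B(f(x);\eps)$ with the \emph{same} radius $\delta/3$ at every point, giving uniform $(\delta/3,\eps)$-localization via $f$. Your bookkeeping with the minimal index $n(x)$ and the measurability of the $\sigma$-simple $f$ are fine and match the paper; also note that the containment step you attribute to Proposition~\ref{prop_spatial_relationship}~(i) does not really need the weak $\sigma$-additivity hypothesis of that proposition, and indeed no $\sigma$-additivity, ampleness, or non-vanishing is used in the paper's proof either.
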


\begin{proof}
Let $\Phi: X \rsq Y$ be a uniformly $(\delta,\eps)$-localized transfunction on $X$. This means that for all $x\in X$, there exists some $y_x \in Y$ with $\Phi(B(x;\delta)) \sqsubset B(y_x;\eps)$. This choice function $x \mapsto y_x$ will be used later. Note that the collection $\{ B(x;\delta/3) : x \in X\}$ is an open cover of $X$. It follows from second-countability of $X$ that there is a countable subcover, which shall be indexed as $\{ B(x_n; \delta/3) : n \in \mb{N}\}$. For each natural $n$, let $y_n := y_{x_n}$ from the choice function above. Next we create a function $f : X \to Y$ given by $f(x) = y_n$ whenever $x \in B(x_n; \delta/3) - \cup_{m<n} B(x_m; \delta/3)$. It follows that $f$ is a $\sigma$-simple function, and therefore is measurable. Furthermore, when $f(x) = y_n$, it follows that $x \in B(x ; \delta/3) \subseteq B(x_n; \delta)$. 

Therefore, it follows that $\Phi(B(x ; \delta/3)) \sqsubset B(y_n ;\eps) = B(f(x) ;\eps)$, which shows that $\Phi$ is uniformly $(\delta/3 ,\eps)$-localized on $X$ via $f$. 
\end{proof}

We build upon the proof of Theorem \ref{thm_uniformly_eps_close_measurable} to develop the next theorem. First, we define left-translation-invariance of a metric on locally compact groups.

\begin{definition}
Let $X$ be a locally compact group with identity $e$, and let $d$ be a metric on $X$. Then $d$ is \textbf{left-translation-invariant} if $d(x,y) = d(zx,zy)$ for all $x,y,z \in X$. When the metric is understood by context, the equivalent definition is that $xB(e ;\eps) = B(x;\eps)$ for all $x\in X$ and $\eps>0$.
\end{definition}

\begin{theorem}\label{thm_uniformly_eps_close_continuous}
Let $X$ be a second-countable metrizable locally compact group with left-translation-invariant metric and let $Y$ be a normed space. Then every $\Phi$ which is uniformly $\eps$-localized on $X$ is uniformly $\eps$-close to some continuous function $g : X \to Y$.
\end{theorem}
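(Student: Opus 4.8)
The plan is to start from the measurable $\sigma$-simple function $f$ produced in Theorem \ref{thm_uniformly_eps_close_measurable} and smooth it into a continuous function $g$ by averaging, exploiting the group structure and the fact that $Y$ is a normed (hence convex) space. Recall from that proof that $\Phi$ is uniformly $(\delta,\eps)$-localized, that $\{B(x_n;\delta/3):n\in\mb N\}$ is a countable subcover of $X$, and that $f(x)=y_n$ on the disjointified pieces $C_n:=B(x_n;\delta/3)-\bigcup_{m<n}B(x_m;\delta/3)$, with the crucial property that $x\in C_n$ forces $B(x;\delta/3)\subseteq B(x_n;\delta)$ and hence $\Phi(B(x;\delta/3))\sqsubset B(y_n;\eps)=B(f(x);\eps)$. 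The key slack is that we have room $\delta/3$ to spare in the ball radius, so a function $g$ that stays within $\delta/3$ of $f$ in an appropriate sense will still witness $\eps$-closeness.

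First I would fix a compactly supported continuous ``mollifier'' weight: using local compactness, second countability and metrizability, pick a left Haar measure $\nu$ on $X$ and a nonnegative continuous $\varphi:X\to[0,\infty)$ with $\varphi$ supported in $B(e;\delta/3)$ and $\int_X\varphi\,d\nu=1$. Then define
\begin{align*}
g(x):=\int_X \varphi(z)\,f(z^{-1}x)\,d\nu(z),
\end{align*}
a $Y$-valued integral which makes sense because $f$ is measurable and bounded on the relevant region and $Y$ is a normed space (one may truncate/assume $f$ locally integrable; boundedness on each translate follows since only finitely many $y_n$ are involved on a neighborhood of any point). Continuity of $g$ follows from continuity of translation in $X$ together with uniform continuity of $\varphi$ and dominated convergence — this is the standard ``convolution with a continuous bump is continuous'' argument, and left-translation-invariance of the metric is what guarantees $z\in\mathrm{supp}\,\varphi=B(e;\delta/3)$ implies $z^{-1}x\in B(x;\delta/3)$.

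The heart of the matter is showing $\Phi$ is uniformly $\eps$-close to $g$, i.e. $\Phi(B(x;\delta'))\sqsubset B(g(x);\eps)$ for a suitable $\delta'>0$. The idea: for $z\in\mathrm{supp}\,\varphi$, the point $z^{-1}x$ lies in $B(x;\delta/3)$, so $z^{-1}x\in C_{n(z)}$ for some index with $B(z^{-1}x;\delta/3)\supseteq$ a fixed small ball $B(x;\delta'')$ around $x$ (using $\delta''$ small enough and the triangle inequality), whence $\Phi(B(x;\delta''))\sqsubset B(f(z^{-1}x);\eps)$ for \emph{every} $z$ in the support of $\varphi$. Thus $\Phi(B(x;\delta''))\sqsubset\bigcap_{z\in\mathrm{supp}\,\varphi}B(f(z^{-1}x);\eps)$; since $g(x)$ is a $\varphi$-weighted average of the points $f(z^{-1}x)$, it lies in the convex hull of $\{f(z^{-1}x):z\in\mathrm{supp}\,\varphi\}$, and a measure carried by every $\eps$-ball about each of these points is carried by the $\eps$-ball about any convex combination of them — more carefully, one shows $\|y-g(x)\|\le\sup_z\|y-f(z^{-1}x)\|<\eps$ for $y$ in the support of $\Phi(\mu)$ with $\mu\sqsubset B(x;\delta'')$, using that $g(x)=\int\varphi(z)f(z^{-1}x)d\nu(z)$ and $\|y-g(x)\|\le\int\varphi(z)\|y-f(z^{-1}x)\|d\nu(z)$. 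This gives $\Phi(B(x;\delta''))\sqsubset B(g(x);\eps)$, establishing uniform $(\delta'',\eps)$-closeness of $\Phi$ to the continuous function $g$.

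I expect the main obstacle to be the bookkeeping that makes the ``every $z^{-1}x$ lands in a single piece whose ball still contains a common small ball about $x$'' step honest: a priori $z^{-1}x$ ranges over all of $B(x;\delta/3)$ and may fall into different $C_n$'s with different centers $x_n$, so one must check that for each such $n$ one still has $B(x;\delta'')\subseteq B(x_n;\delta)$ — this works because $z^{-1}x\in C_n\subseteq B(x_n;\delta/3)$ and $d(x,z^{-1}x)<\delta/3$, so $d(x,x_n)<2\delta/3$ and any $\delta''\le\delta/3$ suffices. The second delicate point is integrability/finiteness of the vector integral defining $g$ near points where infinitely many pieces $C_n$ accumulate; this is handled by noting any point of $X$ has a neighborhood meeting only finitely many of the $B(x_n;\delta/3)$ is \emph{not} automatic, so instead one argues pointwise that $f(z^{-1}x)$ for $z\in B(e;\delta/3)$ takes values among $\{y_n: B(x_n;\delta/3)\cap B(x;\delta/3)\neq\varnothing\}$ and that the norms $\|y_n\|$ on this set are bounded because all such $y_n$ lie within $\eps$ of the support of $\Phi(\nu_0)$ for a single nonzero $\nu_0\sqsubset B(x;\delta/3)$ (ampleness supplies such $\nu_0$, non-vanishing is not needed here since we only need boundedness, which follows once $\Phi(\nu_0)$ is a fixed finite nonzero measure, or trivially if $\Phi(\nu_0)=0$ on that neighborhood), so $g$ is well-defined and locally bounded.
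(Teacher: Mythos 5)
Your overall strategy is the same as the paper's (mollify the $\sigma$-simple $f$ from Theorem \ref{thm_uniformly_eps_close_measurable} by a compactly supported continuous bump and use convexity of balls in $Y$ to keep $\eps$-closeness), but your convolution is taken on the wrong side for a left-translation-invariant metric. You define $g(x)=\int\varphi(z)f(z^{-1}x)\,d\nu(z)$ and claim that $z\in B(e;\delta/3)$ forces $z^{-1}x\in B(x;\delta/3)$. Left-invariance gives $d(z^{-1}x,x)=d(x,zx)=d(e,x^{-1}zx)$, the distance from $e$ of a \emph{conjugate} of $z$, which is not controlled by $d(e,z)$ in a non-abelian group (e.g.\ the $ax+b$ group); so the values $f(z^{-1}x)$ entering $g(x)$ need not be sampled near $x$, and this breaks precisely the step where you claim $\Phi(B(x;\delta''))\sqsubset B(f(z^{-1}x);\eps)$ for every $z$ in $\operatorname{supp}\varphi$, hence the convex-hull conclusion. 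The same orientation problem undermines continuity of $g$: since $f$ is discontinuous you cannot apply dominated convergence to $f(z^{-1}x_k)\to f(z^{-1}x)$; one must shift the increment onto $\varphi$ by a change of variables, and with your orientation that yields $\varphi(x t^{-1})$, whose variation in $x$ is again not controlled by a left-invariant metric (and the substitution also drags in the modular function). The paper's choice $g(x)=\int f(t)\varphi(t^{-1}x)\,d\kappa(t)=\int f(xu)\varphi(u^{-1})\,d\kappa(u)$ is made exactly so that the sampled points $xu$ lie in $xB(e;\beta)=B(x;\beta)$ and so that $d(t^{-1}x,t^{-1}x')=d(x,x')$ lets uniform continuity of $\varphi$ carry the continuity argument. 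Your version is correct for abelian $X$ (so Corollary \ref{cor_uniformly_eps_close_reals} would survive), but not for the theorem as stated.

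There is a second gap in the well-definedness of the vector integral. You replace the paper's compactness argument for local boundedness of $f$ by the observation that all relevant $y_n$ lie within $\eps$ of a common point in a carrier of $\Phi(\nu_0)$ for some nonzero $\nu_0\sqsubset B(x;\delta/3)$ supplied by ampleness. That works only when $\Phi\nu_0\neq 0$. The theorem does not assume $\Phi$ is non-vanishing, and if $\Phi$ vanishes on a neighborhood of $x$ then the targets $y_n$ produced in Theorem \ref{thm_uniformly_eps_close_measurable} are unconstrained there and may be unbounded on every neighborhood of $x$ (infinitely many pieces $C_n$ can accumulate at $x$), so the Bochner integral defining $g(x)$ need not exist; the case $\Phi(\nu_0)=0$ is not ``trivial'' --- it is exactly where your bound disappears. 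The paper closes this with local compactness, a hypothesis of the theorem you never use for this purpose: $\overline{B(x;\alpha)}$ is compact, hence covered by finitely many $B(x_n;\delta/3)$, hence $f$ takes only finitely many values on $B(x;\alpha)$ and is locally bounded, with no reference to $\Phi$ at all. Adopting that argument (and the paper's convolution orientation) repairs your proof.
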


\begin{proof}
Let $e$ denote the identity of $X$ and let $0_Y$ denote the zero in $Y$. Take from Theorem \ref{thm_uniformly_eps_close_measurable} the measurable $f: X \to Y$ as described in the previous proof with the same details. Then there exists $\alpha > 0$ such that for all $x\in X$, $B(x;\alpha)$ has compact closure. Let $x \in X$ be arbitrary. Since $\{B(x_n;\delta/3) : n \in \mb{N}\}$ covers $\overline{B(x;\alpha)}$, it follows that there is a finite subcover $\{B(x_n; \delta/3) : n \le N_x\}$ for some natural number $N_x$ depending on $x$. Therefore, $f(B(x;\alpha)) \subseteq \{y_n : n \le N_x\} \subseteq B(0_Y;M_x)$ for some real $M_x$ depending on $x$. Since $x$ was arbitrary, this means that $f$ is locally bounded.

Now let $\beta := \min\{\delta/3 , \alpha/2 \}$. Since $X$ is a locally compact group, there exists a non-zero (uniformly) continuous function $\varphi : X \to [0,\infty)$ with compact support within $B(e;\beta)$. Now choose the unique appropriately scaled left Haar measure $\kappa$ on $X$ such that $\int \varphi(u^{-1}) d\kappa (u) = 1$.

Now consider the function $g: X \to Y$ given by $g := f \ast \varphi$, the convolution of $f:X \to Y$ and $\varphi: X \to \mb{R}$ using the (vector-valued) integral

$$g(x) = f \ast \varphi (x) := \int f(t) \varphi(t^{-1}x) d\kappa (t) = \int f(xu) \varphi(u^{-1}) d\kappa(u). $$

Note that the integral above is well-defined, because $t \mapsto \varphi(t^{-1}x)$ is zero outside of $xB(e;\beta) = B(x;\beta)$ and $f$ is bounded and finitely-valued on the set $B(x;\beta)$ by an earlier argument. Also, the second equality holds due to left-invariance of $\kappa$ and the substitution $u = x^{-1}t$ which yields $xu = t$ and $u^{-1} = t^{-1}x$.

We shall now show that $g$ is continuous.  Let $x \in X$ and let $\eps > 0$ and choose some $\eta \in (0,\beta)$ with respect to uniform continuity of $\varphi$. Let $x'$ be $\eta$-close to $x$ in $X$: that is, let $x^{-1} x' \in B(e;\eta)$. This implies that $(t^{-1}x)^{-1} (t^{-1}x') = x^{-1} x' \in B(e;\eta)$ for all $t \in X$, so that $t^{-1}x$ and $t^{-1}x'$ are also $\eta$-close in $X$ for all $t \in X$. Since $d(x,x') < \alpha/2$, it follows that $B(x';\alpha/2) \subseteq B(x; \alpha)$, which means that $f(B(x'; \alpha/2)) \subseteq f(B(x; \alpha)) \subseteq B(0_Y; M_x)$. Therefore $M_x$ bounds the vectors obtained by $f$ in both $B(x;\beta)$ and $B(x';\beta)$. Then it follows that $|\varphi(t^{-1}x) - \varphi(t^{-1}x')| < \eps$ and that
\begin{align*}
||g(x) - g(x')|| = \left|\left|\int f(t) [\varphi(t^{-1}x) - \varphi(t^{-1} x')] d\kappa (t) \right|\right| \le 2M_x \eps \kappa(B(e;\beta)) .
\end{align*}
Continuity of $g$ follows since $M_x$ only depends on $x$, $\kappa(B(e;\beta))$ is a constant, and $\eps$ was arbitrary.

To show that $\Phi$ is uniformly $(\beta,\eps)$-localized via $g$, let $x\in X$ be arbitrary and let $\mu \sqsubset B(x;\beta)$. Recall that $B(x;\beta)$ is covered by $\cup_{m=1}^{N_x} B(x_m ; \delta/3)$. Notice that for every $x_m$ with $B(x_m ; \delta/3) \cap B(x; \delta/3) \not= \varnothing$ we have that $B(x;\delta/3) \subseteq B(x_m; \delta)$ which implies that $\Phi (B(x;\delta/3)) \sqsubset B(y_m ; \eps)$. 

If we denote $R := \{y_m: m \le N_x \text{ and } B(x_m;\delta/3) \cap B(x;\delta/3) \not= \varnothing\}$, and if we denote $C := \text{Conv}(R)$, the convex hull of $R$, this implies that 

$$\Phi \mu \sqsubset \bigcap_{y \in R} B(y ; \eps) = \bigcap_{y \in C} B(y; \eps) .$$

If we can show that $g(x) \in C$, then it follows from above that $\Phi$ is $(\beta,\eps)$-localized at $(x,g(x))$.

For each natural $m \le N_x$, we define $A_m := B(e;\beta) \cap x^{-1} f^{-1}(y_m)$ which is empty if $y_m \not\in R$ and we define $\displaystyle c_m := \int_{A_m} \varphi(u^{-1}) d\kappa (u)$ which is zero if $y_m \not\in R$. Then $\sum_{m=1}^{N_x} c_m = \int \varphi(u^{-1}) d\kappa (u) = 1$, and by looking at the convolution function $g$, we see that
\begin{align*}
g(x) &= \int_{B(e;\beta)} f(xu) \varphi(u^{-1}) d\kappa(u) = \int_{B(e;\beta)} \left[\sum_{m=1}^{N_x} y_m \chi_{A_m}(u)\right] \varphi(u^{-1}) d\kappa(u) \\
&= \sum_{m=1}^{N_x} y_m \int_{A_m} \varphi(u^{-1}) d\kappa(u) = \sum_{m=1}^{N_x} c_m y_m \in C .
\end{align*}

Therefore, it follows that $\Phi \mu \sqsubset B(g(x);\eps)$, meaning that $\Phi$ is uniformly $\eps$-close to $g$.
\end{proof}

\begin{corollary}\label{cor_uniformly_eps_close_reals}
Give $\mb{R}^n, \mb{R}^m$ the usual norms. Every uniformly $\eps$-localized $\Phi: \mb{R}^n \rsq  \mb{R}^m $ is uniformly $\eps$-close to some continuous function $g : \mb{R}^n \to \mb{R}^m$.
\end{corollary}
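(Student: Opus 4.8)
The plan is to obtain this as an immediate instance of Theorem \ref{thm_uniformly_eps_close_continuous}, so the only real work is verifying that $\mathbb{R}^n$ with its usual metric meets that theorem's hypotheses. First I would note that $(\mathbb{R}^n,+)$ is a group, that it is second-countable and metrizable via the Euclidean metric (with, say, the rational-coordinate points forming a countable dense set), and that it is locally compact since closed bounded balls are compact by Heine--Borel. Next I would check that the usual metric is left-translation-invariant: because the group operation is addition and $d$ comes from a norm, $d(z+x, z+y) = \|(z+x)-(z+y)\| = \|x-y\| = d(x,y)$ for all $x,y,z \in \mathbb{R}^n$, equivalently $x + B(0;\eta) = B(x;\eta)$ for every $x$ and every $\eta > 0$. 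Finally, $\mathbb{R}^m$ with its usual norm is a normed space, so the target hypothesis is satisfied as well.

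With every hypothesis of Theorem \ref{thm_uniformly_eps_close_continuous} in place, applying it to the uniformly $\eps$-localized transfunction $\Phi: \mathbb{R}^n \rsq \mathbb{R}^m$ produces a continuous $g: \mathbb{R}^n \to \mathbb{R}^m$ to which $\Phi$ is uniformly $\eps$-close, which is exactly the claim. For concreteness one may remark that in that construction the left Haar measure $\kappa$ is simply a normalizing scalar multiple of Lebesgue measure on $\mathbb{R}^n$, the auxiliary $\varphi$ is an ordinary compactly supported mollifier, and $g = f \ast \varphi$ is the usual smoothing of the $\sigma$-simple measurable approximant $f$ supplied by Theorem \ref{thm_uniformly_eps_close_measurable}. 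I do not anticipate any genuine obstacle here: the corollary is a direct specialization, and the only point requiring a sentence of justification is the translation-invariance of the Euclidean metric, which follows at once from the norm structure.
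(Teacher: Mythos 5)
Your proposal is correct and matches the paper's (implicit) intent exactly: the corollary is stated as an immediate specialization of Theorem \ref{thm_uniformly_eps_close_continuous}, and your verification that $(\mb{R}^n,+)$ is a second-countable metrizable locally compact group whose Euclidean metric is translation-invariant, with $\mb{R}^m$ a normed space, is precisely the checking required. Nothing further is needed.
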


For transfunctions not uniformly localized, there is a result analogous to Theorem \ref{thm_uniformly_eps_close_measurable} with appropriate modifications of its proof.

\begin{theorem}\label{thm_eps_close_measurable}
Let $X$ and $Y$ be metric spaces with $X$ second-countable. Then every transfunction $\Phi: X \rsq Y$ which is $\eps$-localized on $X$ is $\eps$-close to some measurable function $f: X \to Y$.
\end{theorem}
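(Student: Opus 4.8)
The plan is to adapt the proof of Theorem \ref{thm_uniformly_eps_close_measurable} by localizing the choice of the radius $\delta$. Since $\Phi$ is $\eps$-localized at every $x \in X$, for each $x$ there exist $\delta_x > 0$ and $y_x \in Y$ such that $\Phi(B(x;\delta_x)) \sqsubset B(y_x;\eps)$; this gives a choice function $x \mapsto (\delta_x, y_x)$. The family $\{B(x;\delta_x/3) : x \in X\}$ is an open cover of $X$, so by second-countability it has a countable subcover $\{B(x_n;\delta_{x_n}/3) : n \in \mb{N}\}$. Writing $\delta_n := \delta_{x_n}$ and $y_n := y_{x_n}$, I would then define $f : X \to Y$ by $f(x) = y_n$ whenever $x$ lies in $B(x_n;\delta_n/3) \setminus \bigcup_{m<n} B(x_m;\delta_m/3)$; this is a countable disjoint union of Borel pieces, so $f$ is $\sigma$-simple and hence measurable.

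The verification step is then to check $\eps$-closeness pointwise. Fix $x \in X$, and let $n$ be the index with $f(x) = y_n$, so $x \in B(x_n;\delta_n/3)$. I want a ball around $x$ on which $\Phi$ is $\eps$-localized toward $y_n = f(x)$. Since $x \in B(x_n;\delta_n/3)$, the triangle inequality gives $B(x;\delta_n/3) \subseteq B(x_n;\delta_n)$, hence $\Phi(B(x;\delta_n/3)) \sqsubset B(y_n;\eps) = B(f(x);\eps)$ by Proposition \ref{prop_spatial_relationship} (i). Thus $\Phi$ is $(\delta_n/3, \eps)$-localized at $(x, f(x))$, which is exactly the condition that $\Phi$ is $\eps$-localized at $(x,f(x))$; since $x$ was arbitrary, $\Phi$ is $\eps$-close to $f$ on $X$.

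The only real subtlety compared with the uniform case is that the radius witnessing localization now varies with the point, so there is no single $\delta$ working everywhere — but since $\eps$-closeness only requires a pointwise witness radius (Definitions \ref{def_localized} and \ref{def_localized_via_function}), the varying $\delta_n/3$ causes no trouble. I expect the main (very mild) obstacle to be bookkeeping: making sure the subcover indexing is consistent so that $f$ is well-defined and measurable, and that each piece of the partition is contained in the corresponding $B(x_n;\delta_n/3)$ so the triangle-inequality inclusion $B(x;\delta_n/3) \subseteq B(x_n;\delta_n)$ holds. Everything else is a direct transcription of the earlier argument with $\delta$ replaced by the point-dependent $\delta_n$.
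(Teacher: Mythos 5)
Your proposal is correct and follows essentially the same argument as the paper: cover $X$ by the balls $B(x;\delta_x/3)$, extract a countable subcover by second-countability, define the $\sigma$-simple $f$ on the disjointified pieces, and use the triangle inequality to get $B(x;\delta_n/3)\subseteq B(x_n;\delta_n)$, hence $\Phi(B(x;\delta_n/3))\sqsubset B(f(x);\eps)$. No gaps; the point-dependent radius is handled exactly as in the paper's proof.
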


\begin{proof}
We use the same framework as the proof from Theorem \ref{thm_uniformly_eps_close_measurable}. Each $x \in X$ has some associated $\delta_x > 0$ from definition of $\eps$-localization at $x$. We form the cover $\{B(x; \delta_x /3) : x \in X\}$ of $X$ which has a countable subcover $\{B(x_n; \delta_n) : n \in \mb{N}\}$, where $\delta_n := \delta_{x_n}$. We define $f(x) := y_n$ when $x \in B(x_n ; \delta_n / 3) - \cup_{m < n} B(x_m ; \delta_m / 3)$. For $x$ with $f(x) = y_n$, we have that $x \in B(x ; \delta_n / 3) \subseteq B(x_n ; \delta_n)$. This means for all $x$ with $f(x) = y_n$, we have that $\Phi (B(x;\delta_n / 3)) \sqsubset B(y_n ; \eps) = B(f(x); \eps)$, meaning that $\Phi$ is $\eps$-localized on $X$ via $f$.
\end{proof}

Alternatively, we develop a theorem analogous to the statement that continuous functions on compact sets are uniformly continuous.

\begin{theorem}\label{thm_compact_uniform}
Let $\Phi: X \rsq Y$ be a transfunction which is $\eps$-localized on $X$. Define $D_\eps (x) := \sup\{\delta > 0 : \Phi \text{ is } (\delta,\eps)\text{-localized at } x \}$. Then $D_\eps : X \to (0,\infty)$ is continuous on $X$ and if $X$ is compact, then $\Phi$ is uniformly $\eps$-localized on $X$.
\end{theorem}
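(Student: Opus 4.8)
The plan is to establish continuity of $D_\eps$ first, and then derive uniform $\eps$-localization on a compact $X$ from a standard minimum/covering argument. For continuity, the key observation is that if $\Phi$ is $(\delta,\eps)$-localized at $x$, say via $\Phi(B(x;\delta)) \sqsubset B(y;\eps)$ for some $y \in Y$, then for any $x'$ with $d(x,x') = r < \delta$ we have $B(x';\delta - r) \subseteq B(x;\delta)$, so by Proposition \ref{prop_spatial_relationship} (i) we get $\Phi(B(x';\delta-r)) \sqsubset B(y;\eps)$; hence $\Phi$ is $(\delta - r,\eps)$-localized at $x'$ and $D_\eps(x') \ge D_\eps(x) - d(x,x')$. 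This gives the Lipschitz-type bound $|D_\eps(x) - D_\eps(x')| \le d(x,x')$ after swapping the roles of $x$ and $x'$, provided we are careful with the case $D_\eps(x) = \infty$; but under the hypothesis that $\Phi$ is $\eps$-localized at every point, each $D_\eps(x)$ is a positive real (it is a supremum over a nonempty set, and the statement asserts the codomain is $(0,\infty)$, so I will note that $D_\eps(x) < \infty$ is part of what ``$\eps$-localized on $X$'' buys us here, or else restrict attention to the finite case). The $1$-Lipschitz bound immediately yields continuity of $D_\eps : X \to (0,\infty)$.

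For the second claim, suppose $X$ is compact. Since $D_\eps$ is continuous and strictly positive on the compact set $X$, it attains a positive minimum $\delta_0 := \min_{x \in X} D_\eps(x) > 0$. By definition of $D_\eps$ as a supremum, for each $x$ the transfunction $\Phi$ is $(\delta,\eps)$-localized at $x$ for every $\delta < D_\eps(x)$; in particular, for any fixed $\delta_1 < \delta_0$ we have that $\Phi$ is $(\delta_1,\eps)$-localized at every $x \in X$, which is precisely uniform $\eps$-localization on $X$ with the common radius $\delta_1$. This is the entire content of Definition \ref{def_uniformly_localized}, so the proof concludes here.

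The main obstacle is the boundary behavior of the supremum defining $D_\eps$: it is not automatic that $\Phi$ is $(D_\eps(x),\eps)$-localized \emph{at the supremum itself}, only for radii strictly below it, so one must phrase the argument in terms of open balls $B(x;\delta)$ with $\delta < D_\eps(x)$ throughout and never evaluate at $\delta = D_\eps(x)$. A secondary subtlety is that the center $y \in Y$ witnessing localization at $x$ may change with $x$, but this causes no trouble because the Lipschitz estimate only transports a \emph{fixed} pair $(\delta,y)$ from $x$ to nearby $x'$, and the definition of $D_\eps$ only cares about the existence of \emph{some} such $y$. One should also double-check the degenerate possibility that $D_\eps(x) = \infty$ for some $x$ (e.g.\ if $\Phi$ is $(\delta,\eps)$-localized at $x$ for arbitrarily large $\delta$); since the theorem states the codomain of $D_\eps$ is $(0,\infty)$, either this is excluded by the ambient assumptions on $\eps$-localized transfunctions, or the continuity/compactness argument should be run with $D_\eps$ replaced by $\min\{D_\eps, C\}$ for a large constant $C$, which does not affect the conclusion.
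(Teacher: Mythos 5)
Your proposal is correct and follows essentially the same route as the paper: the $1$-Lipschitz estimate $|D_\eps(x) - D_\eps(x')| \le d(x,x')$ obtained from ball inclusions (together with Proposition \ref{prop_spatial_relationship} (i)), followed by attainment of a positive minimum of $D_\eps$ on compact $X$ and choosing a common radius strictly below it. Your extra care about the supremum not being attained and the possibility $D_\eps(x)=\infty$ is sensible but does not change the argument.
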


\begin{proof}
Let $x_0 \in X$. Let $x \in B(x_0 ; D_\eps(x_0))$. It must follow by definition of $D_\eps$ that 
$$D_\eps(x_0) - d(x, x_0) \le D_\eps (x) \le D_\eps(x_0) + d(x,x_0);$$
this is because $B(x; D_\eps(x_0) - d(x,x_0)) \subseteq B(x_0; D_\eps(x_0)) \subseteq B(x; D_\eps(x_0) + d(x,x_0))$.

Therefore, $|D_\eps (x) - D_\eps (x_0)| \le d(x,x_0) \to 0$ as $x \to x_0$. Hence, $D_\eps$ is continuous on $X$. If $X$ is compact, then $D_\eps$ obtains its minimum, positive value on $X$; call that value $\delta_X$. Then for any positive $\delta < \delta_X$, we have that $\delta < D_\eps(x)$ for all $x \in X$, meaning that $\Phi$ is $(\delta,\eps)$-localized at every $x \in X$. This precisely means that $\Phi$ is uniformly $(\delta,\eps)$-localized on $X$.
\end{proof}

\section{Graphs of Transfunctions}

We introduce a concept analogous to the graph of a function and prove three theorems that shed some light on the nature of localized transfunctions.

\begin{definition}\label{graph}
	Let $\Phi: X \rsq Y$ be a transfunction, and let $\Gamma \subseteq X \times Y$ be measurable with respect to the product $\sigma$-algebra. We say that \textbf{$\Gamma$ carries $\Phi$}, notated as $\Phi \sqsubset \Gamma$, if for every measurable rectangle $A \times B$
$$
(A \times B) \cap \Gamma =\varnothing \quad \text{implies} \quad \Phi (A) \sqsubset B^c.
$$	
\end{definition}

Similar to how carriers of a measure describe its support, the carriers of a transfunction describe its graph. This is a generalization of the concept of a graph of a function, as indicated by the following theorem.

\begin{theorem}\label{thm_measurable_graph_carrier}
	For every measurable function $f: X \to Y$ the graph of $f$ carries $f_\#$, that is,
	$$
		f_\# \sqsubset \{(x,f(x)) : x \in X\} .
	$$
\end{theorem}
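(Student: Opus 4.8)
The plan is to unpack the definition of $f_\# \sqsubset \Gamma$ for $\Gamma = \{(x,f(x)) : x \in X\}$ directly. By Definition~\ref{graph}, I must show that for every measurable rectangle $A \times B \subseteq X \times Y$ with $(A \times B) \cap \Gamma = \varnothing$, it follows that $f_\#(A) \sqsubset B^c$; that is, for every measure $\mu \in \mc{M}_X$ with $\mu \sqsubset A$, we have $f_\# \mu \sqsubset B^c$.

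First I would translate the hypothesis $(A \times B) \cap \Gamma = \varnothing$ into a statement about $f$. A point $(x, f(x))$ lies in $A \times B$ iff $x \in A$ and $f(x) \in B$, so the rectangle misses the graph exactly when there is no $x \in A$ with $f(x) \in B$, i.e. $A \cap f^{-1}(B) = \varnothing$, equivalently $A \subseteq f^{-1}(B)^c = f^{-1}(B^c)$. Next, given $\mu \sqsubset A$, Proposition~\ref{prop_carrier}~(i) gives $\mu \sqsubset f^{-1}(B^c)$. Then Proposition~\ref{prop_commute_carrier_with_inverse_function} (the forward implication, which holds for positive and for vector measures) yields $f_\# \mu \sqsubset B^c$. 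Since $\mu \sqsubset A$ was arbitrary, this is precisely $f_\#(A) \sqsubset B^c$, completing the verification.

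There is no real obstacle here — the statement is essentially a bookkeeping exercise reconciling Definition~\ref{graph} with the elementary set-theoretic description of a function's graph, and every ingredient (the carrier monotonicity of Proposition~\ref{prop_carrier}~(i) and the carrier/pushforward compatibility of Proposition~\ref{prop_commute_carrier_with_inverse_function}) is already available. The only point requiring any care is making sure the rectangle-misses-graph condition is correctly converted to $A \subseteq f^{-1}(B^c)$ rather than some weaker or stronger containment; once that is pinned down, the chain of implications is immediate and works uniformly for measurable $f$ without any additional hypotheses on $X$, $Y$, or $\mc{M}_X$.
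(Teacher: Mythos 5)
Your proposal is correct and follows essentially the same route as the paper: the key step in both is translating $(A\times B)\cap\Gamma=\varnothing$ into $A\cap f^{-1}(B)=\varnothing$, after which the paper concludes by the one-line computation $f_\#\mu(B)=\mu(A\cap f^{-1}(B))=0$ while you reach the same conclusion by citing Proposition~\ref{prop_carrier}~(i) and the forward implication of Proposition~\ref{prop_commute_carrier_with_inverse_function}. The difference is purely one of bookkeeping, not of substance.
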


\begin{proof}
	If $(A \times B) \cap \{(x,f(x)): x \in X \} = \varnothing$, then $A \cap f^{-1}(B) = \varnothing$, so for every $\mu \sqsubset A$,
	$$
	f_\#(\mu)(B) = \mu (f^{-1} (B)) = \mu(A \cap f^{-1}(B)) = 0 .\vspace{-22 pt}
	$$
\end{proof}

We also have the reverse situation: a subset of $X\times Y$ can generate a carried transfunction.

\begin{theorem}\label{thm_transfunction_from_carrier}
	 Let $(X,\Sigma_X)$ be a measurable space and let $(Y,\Sigma_Y,\lambda)$ be a finite measure space.  If $\Gamma \subseteq X\times Y$ is a measurable set with respect to the product $\sigma$-algebra, then 
	$$
	\Phi(\mu)(B)=(\mu\times\lambda)(\Gamma \cap (X\times B))
	$$
	defines a strongly $\sigma$-additive transfunction from $X$ to $Y$ such that $\Phi \sqsubset\Gamma$.

\end{theorem}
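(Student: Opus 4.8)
The plan is to verify the three assertions in turn: (a) $\Phi(\mu)$ is a finite positive measure on $Y$ for each $\mu\in\mc{M}_X$, so that $\Phi$ is genuinely a transfunction; (b) $\Phi$ is strongly $\sigma$-additive; (c) $\Gamma$ carries $\Phi$. Each reduces to elementary facts about product measures once the relevant definition is unwound.

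For (a), I would fix $\mu\in\mc{M}_X$. Since $\mu$ and $\lambda$ are finite, $\mu\times\lambda$ is a finite measure on the product $\sigma$-algebra, and $\Gamma\cap(X\times B)$ is product-measurable for every $B\in\Sigma_Y$, so $\Phi(\mu)(B)$ is well defined and nonnegative. The map $B\mapsto X\times B$ sends disjoint unions in $\Sigma_Y$ to disjoint unions in the product $\sigma$-algebra, hence so does $B\mapsto\Gamma\cap(X\times B)$; combined with countable additivity of $\mu\times\lambda$ this makes $\Phi(\mu)$ countably additive. Also $\Phi(\mu)(\varnothing)=0$ and $\Phi(\mu)(Y)=(\mu\times\lambda)(\Gamma)\le\mu(X)\lambda(Y)<\infty$, so $\Phi(\mu)$ is a finite positive measure (with $\mc{M}_Y$ understood to be large enough to contain these outputs).

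For (b), the crux is the lemma that $\nu\mapsto\nu\times\lambda$ commutes with bounded countable sums: if $\mu=\sum_{i=1}^\infty\mu_i$ is a bounded sum in $\mc{M}_X$, then
$$\mu\times\lambda \;=\; \sum_{i=1}^\infty(\mu_i\times\lambda)$$
as measures on the product $\sigma$-algebra. I would prove this by observing that both sides are finite measures — the right-hand side has total mass $\lambda(Y)\sum_i||\mu_i||<\infty$ — which agree on every measurable rectangle $A\times C$, since $(\mu\times\lambda)(A\times C)=\mu(A)\lambda(C)=\bigl(\sum_i\mu_i(A)\bigr)\lambda(C)=\sum_i(\mu_i\times\lambda)(A\times C)$; as the measurable rectangles form a $\pi$-system generating the product $\sigma$-algebra, the $\pi$-$\lambda$ theorem forces the two measures to coincide. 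Evaluating both sides at $\Gamma\cap(X\times B)$ gives $\Phi(\sum_i\mu_i)(B)=\sum_i\Phi(\mu_i)(B)$ for all $B$, i.e. $\Phi(\sum_i\mu_i)=\sum_i\Phi(\mu_i)$; since orthogonality was never used, this is strong $\sigma$-additivity.

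For (c), suppose $(A\times B)\cap\Gamma=\varnothing$ and let $\mu\sqsubset A$; I must show $\Phi(\mu)\sqsubset B^c$, i.e. $\Phi(\mu)(B)=0$. Writing $X\times B=(A\times B)\uplus(A^c\times B)$ and intersecting with $\Gamma$ gives $\Gamma\cap(X\times B)=\Gamma\cap(A^c\times B)\subseteq A^c\times Y$, so by monotonicity $\Phi(\mu)(B)\le(\mu\times\lambda)(A^c\times Y)=\mu(A^c)\lambda(Y)=0$ because $\mu(A^c)=0$. Hence $\Phi(A)\sqsubset B^c$ for every such empty rectangle, which is precisely $\Phi\sqsubset\Gamma$. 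The only step that needs more than bookkeeping is the commutation lemma of (b), and even that is a routine uniqueness-of-measure argument, so I anticipate no serious obstacle.
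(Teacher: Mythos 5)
Your proposal is correct and follows essentially the same route as the paper: verify that $\Phi(\mu)$ is a measure via countable additivity of $\mu\times\lambda$, reduce strong $\sigma$-additivity to the identity $\left(\sum_{i=1}^\infty\mu_i\right)\times\lambda=\sum_{i=1}^\infty(\mu_i\times\lambda)$, and obtain $\Phi\sqsubset\Gamma$ from the fact that $\mu\sqsubset A$ forces $(\mu\times\lambda)$ to ignore $A^c\times Y$. The only difference is that you supply a $\pi$-$\lambda$ proof of the sum-commutation identity, which the paper simply asserts, so your write-up is if anything slightly more complete.
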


\begin{proof}
	If $U_1,U_2,\dots \in \Sigma_Y$ are disjoint, then
	\begin{align*}
	\Phi(\mu)(\cup_{n=1}^\infty U_n)&=(\mu\times\lambda)(\Gamma \cap (X\times \cup_{n=1}^\infty U_n)) = (\mu\times\lambda)(\Gamma \cap  \cup_{n=1}^\infty(X\times U_n))\\
	&= (\mu\times\lambda)(\cup_{n=1}^\infty(\Gamma \cap  (X\times U_n))) =\sum_{n=1}^\infty (\mu\times\lambda)(\Gamma \cap  (X\times U_n))\\
	& =\sum_{n=1}^\infty \Phi(\mu)(U_n).
	\end{align*}
$\sigma$-strong additivity of $\Phi$ follows from the equality $\left(\sum_{i=1}^\infty \mu_i\right) \times \lambda = \sum_{i=1}^\infty (\mu_i \times \lambda)$. Moreover, if $(A\times B) \cap \Gamma = \varnothing$ and $\mu \sqsubset A$, then
	\[
	\Phi(\mu)(B)=(\mu\times\lambda)(\Gamma \cap (X\times B))\\
	=(\mu\times\lambda)(\Gamma \cap (A\times B))=0.
	\]
\end{proof}

Some localized transfunctions which are spatially close to measurable functions turn out to be carried by ``fat graphs". And if a transfunction has a ``fat continuous graph", then it is localized. The following theorem makes these claims precise.

\begin{theorem}\label{thm_fat_graphs}
	Let $f: X \to Y$ be measurable. If $\Phi$ is weakly $\sigma$-additive and $\eps$-localized on $X$ via $f$, then
	$$
	\Phi \sqsubset \Gamma := \bigcup_{x\in X} \left( \{x\}\times B(f(x),\varepsilon)\right).
	$$ 
	If $f$ is continuous and $\Phi \sqsubset \Gamma$, then $\Phi$ is localized on $X$ via $f$ with $E \le \eps$.
\end{theorem}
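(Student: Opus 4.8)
The plan is to prove the two implications of Theorem \ref{thm_fat_graphs} separately. For the first implication, suppose $\Phi$ is weakly $\sigma$-additive and $\eps$-localized on $X$ via $f$, and take a measurable rectangle $A\times B$ with $(A\times B)\cap\Gamma=\varnothing$. I want to show $\Phi(A)\sqsubset B^c$, i.e. that every $\mu\sqsubset A$ has $\Phi\mu\sqsubset B^c$. The emptiness condition unpacks to: for every $x\in A$, $\{x\}\times B(f(x),\eps)$ misses $A\times B$, hence $B(f(x),\eps)\cap B=\varnothing$, i.e. $B\subseteq B(f(x),\eps)^c$, so $B(f(x),\eps)\subseteq B^c$. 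Since $\Phi$ is $\eps$-localized at $(x,f(x))$, there is $\delta_x>0$ with $\Phi(B(x,\delta_x))\sqsubset B(f(x),\eps)\subseteq B^c$, so $\Phi(B(x,\delta_x))\sqsubset B^c$ by Proposition \ref{prop_carrier}(i). The family $\{B(x,\delta_x)\cap A: x\in A\}$ is an open cover of $A$ in the subspace topology; more directly, $\{B(x,\delta_x):x\in A\}$ is an open cover of the open-in-$A$ set $A$. I would apply Corollary \ref{cor_spatial_relationship} (using second-countability of $X$ to pass to a countable subcover) to conclude $\Phi(\pi_A\mu)\sqsubset B^c$ for any $\mu$, hence $\Phi(A)\sqsubset B^c$ since $\mu\sqsubset A$ gives $\pi_A\mu=\mu$ by Corollary \ref{cor_projection}. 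This gives $\Phi\sqsubset\Gamma$.

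For the second implication, suppose $f$ is continuous and $\Phi\sqsubset\Gamma$; I must show $\Phi$ is localized on $X$ via $f$ with $E(x)\le\eps$ for all $x$. Fix $x_0\in X$ and any $\eta>\eps$. Set $B:=B(f(x_0),\eta)^c$, a measurable (closed) set in $Y$. By continuity of $f$ at $x_0$, choose $\delta>0$ so that $f(B(x_0,\delta))\subseteq B(f(x_0),\eta-\eps)$; then for every $x\in B(x_0,\delta)$ we have $B(f(x),\eps)\subseteq B(f(x_0),\eta)$, which is disjoint from $B$. Hence $(B(x_0,\delta)\times B)\cap\Gamma=\varnothing$, because a point $(x,y)\in\Gamma$ with $x\in B(x_0,\delta)$ has $y\in B(f(x),\eps)\subseteq B^c$. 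By the defining property of $\Phi\sqsubset\Gamma$, this yields $\Phi(B(x_0,\delta))\sqsubset B^c=\overline{B(f(x_0),\eta)}$. This shows $\Phi$ is $\eta$-localized at $(x_0,f(x_0))$ (closed balls being contained in slightly larger open balls, or one simply repeats with $\eta'\in(\eps,\eta)$ to land inside the open ball $B(f(x_0),\eta)$). Since $\eta>\eps$ was arbitrary, $E(x_0)\le\eps$, and since $x_0$ was arbitrary, $\Phi$ is localized on $X$ via $f$ with $E\le\eps$.

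The first implication is essentially bookkeeping around Corollary \ref{cor_spatial_relationship}; the only subtlety is making sure the open cover argument is applied to the open set $A$ viewed appropriately — but since the corollary is stated for an open set with an open cover, and here $A$ need only be measurable, I would instead cover $\supp(\pi_A\mu)$ or simply note that for each $\mu\sqsubset A$ the union $\bigcup_{x\in A}B(x,\delta_x)$ is an open set $U\supseteq A$ with $\Phi(U)\sqsubset B^c$, whence $\Phi(\pi_A\mu)=\Phi(\pi_A\pi_U\mu)\le\Phi(\pi_U\mu)\sqsubset B^c$ by weak monotonicity and Proposition \ref{prop_carrier}(iii). I expect the main obstacle to be precisely this point — cleanly reducing the measurable-rectangle condition to an open-cover situation where Corollary \ref{cor_spatial_relationship} applies — rather than any deep difficulty; the continuity direction is a routine $\eps$-$\delta$ argument once the disjointness of the rectangle from $\Gamma$ is set up correctly.
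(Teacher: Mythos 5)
Your proposal is correct and follows essentially the same route as the paper: the forward direction via $\eps$-localization at each point of $A$, a countable cover of open balls, and the spatial-relationship results (the paper takes a countable subcover of $A$ directly and applies Proposition \ref{prop_spatial_relationship}(ii) and (i), while you pass through the open union $U\supseteq A$ and Corollary \ref{cor_spatial_relationship} — the same argument in substance); the reverse direction is the paper's continuity argument with $\eta>\eps$ in place of $\eps+2^{-n}$. The only slip is cosmetic: with $B:=B(f(x_0),\eta)^c$ you have $B^c=B(f(x_0),\eta)$ itself (an open ball, not its closure), so no workaround with $\eta'\in(\eps,\eta)$ is needed.
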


\begin{proof}
	For the forward direction, each $x \in X$ warrants some $\delta_x > 0$ such that $\Phi$ is $(\delta_x,\eps)$-localized at $(x,f(x))$. Let $(A \times B) \cap \Gamma = \varnothing$. It follows that $B \cap \left(\bigcup_{a\in A} B(f(a),\varepsilon)\right) = \varnothing$, or that $\bigcup_{a\in A} B(f(a),\varepsilon) \subseteq B^c$.
	
	$\{B(a;\delta_a) : a \in A\}$ is an open cover of $A$ with a countable subcover $\{A_n := B(a_n;\delta_n) : n \in \mb{N}\}$, where $\delta_n := \delta_{a_n}$. This ensures that $\Phi (A_n) \sqsubset B^c$ for each natural $n$. Therefore, by Proposition \ref{prop_spatial_relationship} (ii), $\Phi (\cup_{n=1}^\infty A_n) \sqsubset B^c$. It then clearly follows from $A \subseteq \cup_{n=1}^\infty A_n$ and Proposition \ref{prop_spatial_relationship} (i) that $\Phi (A) \sqsubset B^c$.
	
	For the reverse direction, let $x \in X$ and $n \in \mb{N}$ be arbitrary. Then by continuity of $f$ at $x$, there exists some $\delta$ such that $f(B(x;\delta)) \subseteq B(f(x) ; 2^{-n})$. Then it follows by definition of $\Gamma$ and by our previous argument that 
	$$
	B(x;\delta) \times B(f(x);\eps + 2^{-n})^c \cap \Gamma = \varnothing .
	$$
	Since $\Phi \sqsubset \Gamma$, it follows that $\Phi (B(x;\delta)) \sqsubset B(f(x);\eps+2^{-n})$, resulting in $\Phi$ being localized on $X$ via $f$ and that $E(x) \le \eps + 2^{-n}$ for all $x \in X$ and $n \in \mb{N}$. Since $x$ and $n$ were arbitrary, we have $E \le \eps$.
\end{proof}

\section{Markov Operators, Transport Plans and Transfunctions}

Before we introduce our next theorem, some definitions will be presented.

\begin{definition}
Let $\mu$ and $\nu$ be probability measures on $(X,\Sigma_X)$ and $(Y,\Sigma_Y)$, respectively, and let $\kappa$ be a probability measure on the product measurable space $(X\times Y, \Sigma_{X \times Y})$. We say that $\kappa$ is a \emph{transport plan} with \emph{marginals} $\mu, \nu$ if 
\begin{align*}
\kappa(A \times Y) = \mu(A) \quad \text{and} \quad \kappa(X \times B) = \nu(B) \quad \text{for all }  A \in \Sigma_X, B \in \Sigma_Y .
\end{align*}
\end{definition}

\begin{definition}\label{def_Markov_operator}
Let $\mu$ and $\nu$ be probability measures on $(X,\Sigma_X)$ and $(Y,\Sigma_Y)$ respectively. Let $\mc{L} (X,\mu)$ and $\mc{L} (Y,\nu)$ be the Banach spaces of all $\mu$-integrable functions on $X$ and all $\nu$-integrable functions on $Y$, respectively, with the usual norms. We say that a map $T : \mc{L}(X,\mu) \to \mc{L}(Y,\nu)$ is a \emph{Markov operator} if:
\begin{enumerate}[label=(\roman*)]
\item $T$ is linear with $T 1_X = 1_Y$;
\item $f \ge 0$ implies $Tf \ge 0$ for all $f \in \mc{L}(X,\mu)$;
\item $\int_X f d\mu = \int_Y Tf d\nu$ for all $f \in \mc{L}(X,\mu)$.
\end{enumerate}
\end{definition}

Notice that the definition of Markov operators depends on underlying measures $\mu$ and $\nu$ on $X$ and $Y$, respectively. Next we define some notation. If $\mu$ is a measure on $(X,\Sigma_X)$ and if $f \in \mc{L}(X,\mu)$, we define by $\int_{\square} f d\mu$ the signed measure $A \mapsto \int_A f d\mu$ and we also define the sets of measures
\begin{align*}
\mc{M}^+_\mu := \left\{\int_{\square} f d\mu : f \in \mc{L}(X,\mu) , f \ge 0\right\} \text{ and } \mc{M}_\mu := \left\{\int_{\square} f d\mu : f \in \mc{L}(X,\mu)\right\}.
\end{align*}

Notice that $\mc{M}_\mu^+$ is a set of positive measures and $\mc{M}_\mu$ is a Banach space of signed measures with respect to the variation norm, and both sets are closed under bounded sums. We now define some characteristics for transfunctions that are analogous to (ii) and (iii) from Definition \ref{def_Markov_operator}.

\begin{definition}
Let $\mu$ and $\nu$ be measures on $X$ and $Y$, respectively, and let $\Phi: \mc{M}_\mu \to \mc{M}_\nu$. We say that $\Phi$ is \textbf{positive} if $\mu \in \mc{M}_\mu^+$ implies $\Phi \mu \in \mc{M}_\nu^+$ and that $\Phi$ is \textbf{total-measure-preserving} if $(\Phi\rho)(Y)=\rho(X)$ for all $\rho \in \mc{M}_\mu$.
\end{definition}

By \cite{Taylor}, there is a bijective relationship between transport plans and Markov operators. We will show the bijective relationship between certain transfunctions and Markov operators, which will imply that all three concepts are connected. 

\begin{proposition}\label{prop_linear_bijection}
	Let $\mu$ be a measure on $(X,\Sigma_X)$. Define $b_\mu: \mc{L}(X,\mu) \to \mc{M}_\mu$ via $b_\mu f := \int_\square f d\mu$. Then $b_\mu$ is a positive linear isometry such that $b_\mu$ is strongly $\sigma$-additive on bounded sums. It follows that $b_\mu^{-1}$ has the same properties.
\end{proposition}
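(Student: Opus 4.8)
The plan is to verify directly that $b_\mu$ satisfies each named property, exploiting only elementary facts about the Lebesgue integral and the variation norm. First I would record that $b_\mu$ is linear: for $f,g \in \mc{L}(X,\mu)$ and scalars $\alpha,\beta$, the identity $\int_A (\alpha f + \beta g)\,d\mu = \alpha\int_A f\,d\mu + \beta\int_A g\,d\mu$ for every $A \in \Sigma_X$ says precisely that $b_\mu(\alpha f + \beta g) = \alpha\, b_\mu f + \beta\, b_\mu g$ as measures. Positivity is equally immediate: if $f \ge 0$ then $\int_A f\,d\mu \ge 0$ for all $A$, so $b_\mu f \in \mc{M}_\mu^+$, and by the definitions of $\mc{M}_\mu$ and $\mc{M}_\mu^+$ the map $b_\mu$ is onto each of these in turn, so the inverse is well-defined and positivity transfers back.

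Next I would check that $b_\mu$ is an isometry. The total variation of the signed measure $\nu := \int_\square f\,d\mu$ is $\|\nu\| = |\nu|(X)$, and the standard computation (using the Hahn decomposition of $X$ into the sets where $f \ge 0$ and $f < 0$) gives $|\nu|(A) = \int_A |f|\,d\mu$ for all $A$; hence $\|b_\mu f\| = \int_X |f|\,d\mu = \|f\|_{\mc{L}(X,\mu)}$. Being a linear isometry onto $\mc{M}_\mu$, the map $b_\mu$ is bijective with isometric inverse $b_\mu^{-1}$, which is therefore also linear and, by the previous paragraph, positive.

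For strong $\sigma$-additivity on bounded sums: suppose $(f_i)_{i=1}^\infty$ in $\mc{L}(X,\mu)$ satisfies $\sum_{i=1}^\infty \|f_i\|_{\mc{L}(X,\mu)} < \infty$. By completeness of $\mc{L}(X,\mu)$ the series $f := \sum_{i=1}^\infty f_i$ converges in norm, and by the isometry just established $\sum_{i=1}^\infty \|b_\mu f_i\| = \sum_{i=1}^\infty \|f_i\| < \infty$, so $\sum_{i=1}^\infty b_\mu f_i$ converges in $\mc{M}_\mu$ (which is a Banach space under the variation norm). Since $b_\mu$ is continuous — indeed isometric — it commutes with norm-convergent series, giving $b_\mu f = \sum_{i=1}^\infty b_\mu f_i$; that is, $b_\mu(\sum_i f_i) = \sum_i b_\mu f_i$. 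The same continuity argument applied to $b_\mu^{-1}$ (or the observation that a bounded sum $\sum_i \nu_i$ in $\mc{M}_\mu$ pulls back to a bounded sum $\sum_i b_\mu^{-1}\nu_i$ in $\mc{L}(X,\mu)$) shows $b_\mu^{-1}$ is strongly $\sigma$-additive on bounded sums as well.

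The only step requiring genuine care — and the one I would expect to be the main obstacle — is the variation-norm computation $|\int_\square f\,d\mu|(A) = \int_A |f|\,d\mu$, since it is the linchpin of both the isometry claim and the convergence arguments that follow; everything else is formal. I would handle it by taking a Hahn decomposition $X = P \uplus N$ for the signed measure $\nu = \int_\square f\,d\mu$, noting that $P$ and $N$ may be taken (up to $\mu$-null sets) to be $\{f \ge 0\}$ and $\{f < 0\}$, and then computing $|\nu|(A) = \nu(A\cap P) - \nu(A\cap N) = \int_{A\cap P} f\,d\mu - \int_{A\cap N} f\,d\mu = \int_A |f|\,d\mu$.
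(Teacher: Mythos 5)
Your proposal is correct and follows essentially the same route as the paper: linearity and positivity read off from the integral, the isometry via the computation $\|b_\mu f\| = \int_X |f|\,d\mu$ (your Hahn-decomposition argument is just a spelled-out version of the paper's $f^+/f^-$ split), strong $\sigma$-additivity from continuity of the isometry applied to norm-convergent series, and the inverse inheriting all properties. The only cosmetic differences are that you obtain surjectivity directly from the definition of $\mc{M}_\mu$ where the paper cites the Radon--Nikodym theorem, and you get the isometry of $b_\mu^{-1}$ directly rather than via the inverse mapping theorem; both shortcuts are legitimate.
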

\begin{proof}
Positivity and linearity of integrals ensure that $b_\mu$ is positive and linear. Surjectivity of $b_\mu$ is the statement of the Radon-Nikodym Theorem. Injectivity and isometry hold because 
$$||b_\mu f|| = ||b_\mu(f^+) - b_\mu(f^-)|| = \int_X f^+ d\mu + \int_X f^- d\mu = \int_X |f| d\mu = ||f||.$$

Let $(f_i)_{i=1}^\infty$ be a sequence from $\mc{L}(X,\mu)$ such that $\sum_{i=1}^\infty ||f_i|| < \infty$. Then $\sum_{i=1}^\infty f_i \in \mc{L}(X,\mu)$ and $\sum_{i=1}^k f_i \to \sum_{i=1}^\infty f_i$ as $k \to \infty$, so by continuity of $b_\mu$, we obtain that $\sum_{i=1}^k b_\mu(f_i) \to \sum_{i=1}^\infty b_\mu (f_i)$ and $b_\mu (\sum_{i=1}^k f_i) \to b_\mu(\sum_{i=1}^\infty f_i)$ as $k \to \infty$. Since $\sum_{i=1}^k b_\mu(f_i) = b_\mu (\sum_{i=1}^k f_i)$ for all natural $k$ by linearity of $b_\mu$ it follows that $\sum_{i=1}^\infty b_\mu (f_i) = b_\mu(\sum_{i=1}^\infty f_i)$. Notice that $\sigma$-strong additivity on bounded sums only requires that $b_\mu$ is bounded and linear.

 Therefore, $b_\mu^{-1}$ is also a linear isometry by the inverse mapping theorem. Positivity of $b_\mu^{-1}$ follows since $b_\mu(\mc{L}^+(X,\mu)) = \mc{M}_\mu^+$. To see that $b_\mu^{-1}$ is $\sigma$-additive on bounded sums, replace $(f_i)_{i=1}^\infty$ with $(\rho_i)_{i=1}^\infty$ from $\mc{M}_\mu$ with $\sum_{i=1}^\infty ||\rho_i|| < \infty$ and replace $b_\mu$ with $b_\mu^{-1}$ in the argument above.
\end{proof}

Next we prove the connection between certain transfunctions and Markov operators.

\begin{theorem}\label{thm_Markov_transfunction_char}
Let $\mu$ and $\nu$ be probability measures on $(X,\Sigma_X)$ and $(Y,\Sigma_Y)$ which generate some sets $\mc{M}_X$ and $\mc{M}_Y$ respectively. Every Markov operator $T: \mc{L}(X,\mu) \to \mc{L}(Y,\nu)$ corresponds uniquely to a positive strongly $\sigma$-additive total-measure-preserving transfunction $\Phi: \mc{M}_X \to \mc{M}_Y$ with $\Phi(\mu)=\nu$ and vice versa according to the relation that 
$$
\int_B T(1_A) d\nu = \Phi(\pi_A \mu)(B)
$$ 
for all $A \in \Sigma_X, B \in \Sigma_Y$.
\end{theorem}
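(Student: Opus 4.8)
The plan is to establish the correspondence $T \leftrightarrow \Phi$ in both directions using the linear isometry $b_\mu$ from Proposition \ref{prop_linear_bijection}, and then to check that each bundle of properties (linearity plus $T1_X = 1_Y$ on one side; positivity, strong $\sigma$-additivity, total-measure-preservation plus $\Phi(\mu)=\nu$ on the other) transfers correctly. The natural definition is $\Phi := b_\nu \circ T \circ b_\mu^{-1}$, i.e. $\Phi\rho := \int_\square T\left(\frac{d\rho}{d\mu}\right) d\nu$; the displayed relation is just the special case $\rho = \pi_A\mu$, for which $\frac{d\pi_A\mu}{d\mu} = 1_A$, so $\Phi(\pi_A\mu)(B) = \int_B T(1_A)\,d\nu$. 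Conversely, given $\Phi$, define $T := b_\nu^{-1}\circ\Phi\circ b_\mu$, i.e. $Tf := \frac{d(\Phi(f\cdot\mu))}{d\nu}$; one must first check this is well-defined, which requires $\Phi(f\cdot\mu) \ll \nu$ — this follows because $\Phi$ maps into $\mc{M}_Y$, which by hypothesis consists of measures generated by $\nu$ (as in Example \ref{ex_ample_abs_cont}), hence all absolutely continuous with respect to $\nu$.

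First I would record that $b_\mu$ and $b_\nu$ are positive linear isometries, strongly $\sigma$-additive on bounded sums, with the same true of their inverses (Proposition \ref{prop_linear_bijection}). Then, starting from a Markov operator $T$, I would define $\Phi$ as the composite above and verify: (a) $\Phi$ is strongly $\sigma$-additive, since it is a composition of three maps each strongly $\sigma$-additive on bounded sums — here one just needs that composition preserves this, which is immediate from linearity/continuity; (b) $\Phi$ is positive, since $\rho \in \mc{M}_\mu^+$ forces $b_\mu^{-1}\rho \ge 0$, hence $T(b_\mu^{-1}\rho) \ge 0$ by property (ii) of Markov operators, hence $\Phi\rho = b_\nu(T(b_\mu^{-1}\rho)) \in \mc{M}_\nu^+$; (c) $\Phi$ is total-measure-preserving, since $(\Phi\rho)(Y) = \int_Y T(b_\mu^{-1}\rho)\,d\nu = \int_X b_\mu^{-1}\rho\,d\mu = \rho(X)$ by property (iii); (d) $\Phi(\mu) = \nu$, since $b_\mu^{-1}\mu = 1_X$, $T1_X = 1_Y$, and $b_\nu 1_Y = \nu$. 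The displayed identity is then immediate. In the other direction, from a qualifying transfunction $\Phi$ I would define $T := b_\nu^{-1}\circ\Phi\circ b_\mu$ and run the analogous checks: linearity and $\sigma$-additivity of $T$ follow from those of $\Phi$ and the $b$-maps; $T1_X = 1_Y$ from $\Phi(\mu)=\nu$; positivity of $T$ from positivity of $\Phi$ together with $b_\mu(\mc{L}^+) = \mc{M}_\mu^+$; and property (iii) from total-measure-preservation applied to $\rho = f^+\cdot\mu$ and $\rho = f^-\cdot\mu$ (using $\sigma$-additivity, or just additivity, of $\Phi$ to handle $f = f^+ - f^-$) — here linearity of $\Phi$ on differences is what lets us reduce to the positive case. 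Finally I would note that the two constructions are mutually inverse: $b_\nu\circ(b_\nu^{-1}\circ\Phi\circ b_\mu)\circ b_\mu^{-1} = \Phi$ and likewise for $T$, so the correspondence is a genuine bijection, and uniqueness follows because the displayed relation determines $\Phi$ on all $\pi_A\mu$, hence by strong $\sigma$-additivity and linearity (passing through simple functions and bounded limits) on all of $\mc{M}_X$.

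The main obstacle I anticipate is the well-definedness and domain-matching bookkeeping rather than any deep step: one must be careful that $\mc{M}_X$ really does coincide with $\mc{M}_\mu$ (and $\mc{M}_Y$ with $\mc{M}_\nu$) under the phrase ``$\mu$ and $\nu$ generate $\mc{M}_X$ and $\mc{M}_Y$'', so that $b_\mu$ and $b_\mu^{-1}$ are defined exactly on the spaces in play; and that $\Phi(f\cdot\mu) \ll \nu$ holds for every $f \in \mc{L}(X,\mu)$, not merely $f \ge 0$, so that $b_\nu^{-1}\circ\Phi$ makes sense. A secondary subtlety is that property (iii) of a Markov operator is stated for all $f \in \mc{L}(X,\mu)$ whereas total-measure-preservation of $\Phi$ is stated for all $\rho \in \mc{M}_\mu$; bridging these requires decomposing $f$ into positive and negative parts and invoking the linearity of $\Phi$ that comes packaged with strong $\sigma$-additivity (Definition \ref{def_sigma_additive}). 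Once these matchings are pinned down, every individual verification is a one-line computation.
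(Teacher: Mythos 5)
Your proposal is correct and takes essentially the same route as the paper: both directions are handled by conjugating with the isometries of Proposition \ref{prop_linear_bijection}, setting $\Phi := b_\nu \circ T \circ b_\mu^{-1}$ and $T := b_\nu^{-1} \circ \Phi \circ b_\mu$, and transferring positivity, strong $\sigma$-additivity, and the normalization/total-measure properties through the composition. Your added remarks on well-definedness ($\mc{M}_Y = \mc{M}_\nu$ so that $b_\nu^{-1}$ applies) and on the two constructions being mutually inverse only make explicit what the paper leaves implicit.
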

\begin{proof}

Let $T: \mc{L}(X,\mu) \to \mc{L}(Y,\nu)$ be a Markov operator. Define $\Phi := b_\nu \circ T \circ b_\mu^{-1}$. Since all three operators in the definition of $\Phi$ are positive and strongly $\sigma$-additive, we see that $\Phi$ is also positive and strongly $\sigma$-additive. Next, if $\rho \in \mc{M}_\mu$, then 
$$
(\Phi \rho) (Y) = b_\nu \circ T(b_\mu^{-1} \rho) (Y) = \int_Y T(b_\mu^{-1} \rho) d\nu = \int_X b_\mu^{-1} (\rho) d\mu = \rho(X)
$$ 
by the definitions of the isometries and property (iii) of $T$, so $\Phi$ is total-measure-preserving. Finally, notice that 
$$
\Phi(\pi_A \mu) (B) = (b_\nu \circ T \circ b_\mu^{-1} (\pi_A \mu))(B) = (b_\nu (T 1_A)) (B) = \int_B T(1_A) d\nu
$$ 
for all $A \in \Sigma_X$ and $B \in \Sigma_Y$, hence the relation holds.



Conversely, let $\Phi: \mc{M}_\mu \to \mc{M}_\nu$ be a positive $\sigma$-additive total-measure-preserving transfunction with $\Phi(\mu)=\nu$. Define $T := b_\nu^{-1} \circ \Phi \circ b_\mu$. Then $T(1_X) = b_\nu^{-1} \circ \Phi (b_\mu (1_X)) = b_\nu^{-1} (\Phi \mu) = b_\nu^{-1} \nu = 1_Y$. Since all three operators in the definition of $T$ are positive and strongly $\sigma$-additive, we see that $T$ is also positive and strongly $\sigma$-additive, hence linear, satisfying parts (i) and (ii) of Definition \ref{def_Markov_operator}. Next, if $f \in \mc{L}(X,\mu)$, then 
$$
\int_Y T f d\nu = \int_Y b_\nu^{-1} (\Phi \circ b_\mu f) d\nu = (\Phi(b_\mu f)) (Y) = (b_\mu f)(X) = \int_X f d\mu,
$$
 so (iii) of Definition \ref{def_Markov_operator} is met. Finally, notice that 
$$
\int_B T(1_A) d\nu = \int_B b_\nu^{-1} (\Phi \circ b_\mu (1_A)) d\nu = \Phi(b_\mu (1_A)) (B) = \Phi(\pi_A \mu) (B)
$$
 for all $A \in \Sigma_X$ and $B \in \Sigma_Y$, so the relation holds. 
\end{proof}

Note that if probability measure $\mu'$ also generates $\mc{M}_\mu$, and if we define $\nu'=\Phi(\mu')$, then the same $\Phi: \mc{M}_X \to \mc{M}_Y$ corresponds to a Markov operator $T' : \mc{L}(X,\mu') \to \mc{L}(Y,\nu')$ and it corresponds to a transport plan $\kappa'$ with marginals $\mu',\nu'$. Therefore $T$ and $T'$ are different Markov operators, $\kappa$ and $\kappa'$ are different plans, yet they follow the same ``instructions'' encoded by $\Phi$. In this regard, $\Phi$ is a global way to describe a transportation method independent of marginals.

\printbibliography


\end{document}